\newtheorem{theorem}{Theorem}[section]
\newtheorem{lemma}[theorem]{Lemma}
\newtheorem{proposition}[theorem]{Proposition}
\newtheorem{corollary}[theorem]{Corollary}
\theoremstyle{definition}
\newtheorem{definition}[theorem]{Definition}
\theoremstyle{remark}
\newtheorem{remark}[theorem]{Remark}
\newtheorem{example}[theorem]{Example}
\let\S\@undefined
\DeclareMathOperator{\rank}{rank}
\DeclareMathOperator{\conv}{conv}
\DeclareMathOperator{\Dec}{Dec}
\newcommand{\Mw}{M_{\omega}}
\newcommand{\R}{\mathbb{R}}
\newcommand{\BG}{\mathcal{G}}
\newcommand{\AGerz}{\langle A\rangle_{\mathcal{G}}}
\newcommand{\AGGerz}{\langle A\rangle_{\mathcal{G'}}}
\newcommand{\CGerz}{\langle C\rangle_{\mathcal{G}}}
\newcommand{\CGGerz}{\langle C\rangle_{\mathcal{G'}}}
\newcommand{\Aerz}{\langle A\rangle}
\newcommand{\Cerz}{\langle C\rangle}
\newcommand{\BGG}{\mathcal{G'}}
\newcommand{\Gmin}{\mathcal{G}_{min}}
\newcommand{\Gmax}{\mathcal{G}_{max}}
\tikzstyle{knoten}=[fill,shape=circle,inner sep=2pt,outer sep=0pt,minimum size=2pt]
\tikzstyle{knoten2}=[fill,shape=circle,inner sep=2pt,outer sep=0pt,minimum size=2pt]
\tikzstyle{background}=[rectangle,
\title{Bergman fans and decomposition complexes}
\author{Martin Dlugosch}
\address{Fachbereich Mathematik und Informatik, Universit\"at Bremen,
  Bibliothekstra\ss{}e 1, 28359 Bremen, Bremen, Germany.}
\email[Martin Dlugosch]{mdlug@math.uni-bremen.de }
\begin{document}
\maketitle
\begin{abstract}
We introduce \emph{decomposition complexes} of posets, which generalize order complexes. 
The main advantage of our construction is that decomposition complexes are closed under taking products. 
Other special instances of this theory include nested set complexes as well as Bergman complexes. 
\end{abstract}

\section{Introduction}

Let $P$ be a finite poset. 
The order complex of $P$ is the abstract simplicial complex whose vertices are the elements of $P$ and simplices are 
chains \emph{i.e.} totally ordered subsets of $P$. 
They have proven to contain important information about the poset, see 
\cite{Fol66}, \cite{Gom88} or \cite{B82}. 
An abstract simplicial complex can be identified with its face poset. 
In the case of order complexes we obtain the set of non-empty chains in $P$ ordered by inclusion.  

Though easy to handle order complexes have one defect. 
As for all abstract simplicial complexes there is no product structure 
since, thinking in terms of realizations, even the product of two 1-simplices is a quadrangle, 
which is not simplicial any more. 
%The order complex of the product of posets is not the product of order complexes. 
At least, realizations of order complexes of products can be chosen such that they subdivide 
products of realizations of order complexes in terms of polyhedral complexes \cite{Z95}. 

Here \emph{decomposition complexes} come into play. 
They can be seen as generalizations of order complexes of posets, which 
are closed under taking direct products. 
To make this work decomposition complexes describe face posets of 
objects called 
\emph{polytopal pseudo-complexes} instead of simplicial complexes. 
As a rule of thumb: the more special the case, the nicer the properties of the decomposition complex. 

Chapter \ref{ch2} is rather short. It contains all new basic definitions. 

The following Chapter \ref{ch2b} introduces conditions under which the main objects have nice properties. 
Some of the proofs used are fairly technical. 
They can be skipped without missing key ideas of the paper. 

Chapter \ref{ch3} is devoted to the problem of finding realizations \emph{i.e.} sets of polytopes whose face posets 
equal the decomposition complexes. This is crucial in the sense that illustrating examples are presented in terms of 
such realizations. 

The most important (in some sense even \emph{characterizing}) issue of decomposition complexes behaving well 
under taking products is considered in Chapter \ref{ch4}. 

The last two chapters deal with special instances of decomposition complexes which had a considerable impact on the 
development of the theory. 
Chapter \ref{ch5} is about \emph{nested set complexes}, which were 
introduced by Feichtner/Kozlov \cite{FK04}. 
They are the combinatorial core of the De Concini/Procesi theory of \emph{wonderful models of subspace arrangements}. 
Since the introduction is rather short, we refer to \cite{FK04}, \cite{Fe05} or \cite{FM05} for more background. 

The other instance are \emph{Bergman fans} of matroids in Chapter \ref{ch6}. 
Bergman fans were introduced by Bergman \cite{BE71}, but they have received attention recently, after 
Sturmfels \cite{STU02} recognized them working in the emerging field of \emph{tropical geometry}. 
Helpful introductions to Bergman fans of matroids include \cite{AK04} and \cite{FS04}. 

Though Bergman fans and nested set complexes are defined in different languages, they can both be seen 
as special cases of decomposition complexes. 

\section{Decomposition sets}
\label{ch2}
Let $P$ be a finite poset.

\begin{definition}[Decompositions, Decomposition sets]
A \emph{decomposition} is a triple $( x,z,y )$ of elements of $P$ with $x < z<y$ 
such that $  [x,z]\times [z,y] \cong [x,y]$ via an isomorphism $\psi$ which sends $(u,z)\mapsto u$ and $(z,v)\mapsto (v)$. 
A decomposition is \emph{trivial} if either $z =x$ or $z=y$. 
Otherwise the decomposition is \emph{proper}. 

A \emph{decomposition set} $\BG$ is a set of decompositions in $P$ which 
contains all trivial decompositions. 
\end{definition}
The set of decomposition sets can be ordered by inclusion. 
There is always a minimal decomposition set $\Gmin$, consisting of the trivial decompositions, 
as well as a maximal decomposition set $\Gmax$, consisting of all decompositions. 

For a decomposition set $\BG$ and a subset $A\subseteq P$ we set 
\begin{displaymath}
\langle A\rangle_{\BG}:= \bigcap \{ A \subseteq B \subseteq P ~ | ~ 
\forall ~ (x,z,y)\in \BG ~ :~  x,y \in B
\Rightarrow z \in B \} 
\end{displaymath} 
\begin{remark}
\label{rem:closure}
It is easy to check that $\langle \cdot \rangle_\BG$ is a closure operator \emph{i.e.} 
\begin{itemize}
\item $A\subseteq \langle A\rangle_\BG$,
\item $A\subseteq B \Rightarrow \langle A\rangle_\BG \subseteq \langle B\rangle_\BG$,
\item $\langle ~ \langle A\rangle_\BG ~ \rangle_\BG = \langle A\rangle_\BG$.
\end{itemize}
\end{remark}

\begin{definition}[Decomposition complexes]
Let $P$ be a finite poset and $\BG$ a decomposition set of $P$. 
The \emph{decomposition complex} of $P$ resp. the decomposition set $\BG$ is defined as 
\begin{displaymath}
\mathcal{D}(P,\BG) := \{ \CGerz  ~| ~ C \text{ non-empty chain in } P\}. 
\end{displaymath}
\end{definition}

\begin{remark}
For any poset $P$ the decomposition complex resp. the minimal decomposition set 
is the face poset of the order complex of $P$, since the operator $\langle \cdot \rangle_{\Gmin}= id$. 
\end{remark}

\begin{example}
\label{ex:first}
Let $P$ be the power set lattice of rank $2$. 
It has the property that any triple $x<z<y$ gives a decomposition. 
Figure \ref{fig:first} shows Hasse-diagrams of $P$ on the left, its decomposition complex resp. the 
minimal decomposition set in the middle and its decomposition complex resp. the maximal decomposition set on the right.   
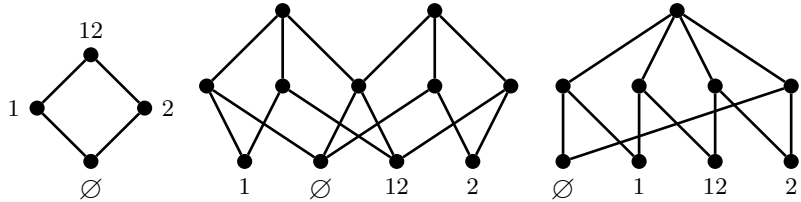
\begin{figure}[h]
{\small
\centering
\subfloat{
 \begin{tikzpicture}
[line width=1,description/.style={fill=white,inner sep=2pt},node distance=1cm]
  \node [knoten](1) [label=below:$\emptyset$]{};
  \node [knoten](2) [above left of=1,label=left:$1$] {};
  \node [knoten](3) [above right of=1,label=right:$2$]{};
  \node [knoten](4) [above right of=2,label=above:$12$]{};
  %\node [knoten](4) [above right of=2,label=right:4]{};
  %\node [knoten](12) [above right of=6,label=left:12]{};
  \path[-] (1) edge (2) edge (3);
  %\path[-] (6) edge (2) edge (3) edge (12);
  \path[-] (4) edge (3) edge (2);
%  \path[-] (oben) edge node[above right] {2}(rechts);
  %\path[-] (oben) edge node[above left] {1} (links);
  %\path[-] (links) edge node[below left] {3} (unten);
  %\path[-] (unten) edge node[below right] {4} (rechts);
\end{tikzpicture}
}
{
\begin{tikzpicture}[line width=1,scale=0.5]
[description/.style={fill=white,inner sep=2pt}]
  \node [knoten](leer) at (0,0) [label=below:$\emptyset$] {};
  \node [knoten](1) [left of=leer,label=below:$1$] {};
  \node [knoten](12) [right of=leer,label=below:$12$] {};
  \node [knoten](2) [right of=12,label=below:$2$] {};
  \node [knoten](leer1) at (-3,2) {};
  \node [knoten](112) [right of=leer1] {};
    \node [knoten] (leer12) [right of=112] {};
  \node [knoten](leer2) [right of=leer12] {};
\node [knoten](212) [right of=leer2] {};
  \node [knoten](all)  [above of=112] {};
  \node [knoten](all2) [above of=leer2] {};
  \path[-] (leer) edge (leer1) edge (leer2);
\path[-] (1) edge (leer1) edge (112);
\path[-] (2) edge (212) edge (leer2);
\path[-] (12) edge (112) edge (212);
\path[-] (all) edge (leer1) edge(112);
\path[-] (all2) edge (leer2) edge(212);
\path[-] (leer12) edge (all) edge(all2) edge(leer) edge (12);
\end{tikzpicture}
}
{
\begin{tikzpicture}[line width=1,scale=0.5]
[description/.style={fill=white,inner sep=2pt}]
  \node [knoten](leer) at (0,0) [label=below:$\emptyset$] {};
  \node [knoten](1) [right of=leer,label=below:$1$] {};
  \node [knoten](12) [right of=1,label=below:$12$] {};
  \node [knoten](2) [right of=12,label=below:$2$] {};
  \node [knoten](leer1) [above of=leer] {};
  \node [knoten](112) [right of=leer1] {};
  \node [knoten](212) [right of=112] {};
  \node [knoten](leer2) [right of=212] {};
  \node [knoten](all) at (3,4) {};
 % \node [knoten](1) [right of=leer,label=below:$1$] {};
  \path[-] (leer) edge (leer1) edge (leer2);
\path[-] (1) edge (leer1) edge (112);
\path[-] (2) edge (212) edge (leer2);
\path[-] (12) edge (112) edge (212);
\path[-] (all) edge (leer1) edge (leer2) edge(112) edge (212);
\end{tikzpicture}

}}
\caption{$P$ and its decomposition complexes resp. the minimal and the maximal decomposition set}
\label{fig:first}
\end{figure}
\end{example}

\begin{remark}
For a decomposition $(x,z,y)$ of $P$ we obtain a \emph{dual decomposition} $(y,z,x)$ in $P^{op}$. 
Denote the decomposition set consisting of the duals of $\BG$ by $\BG^{op}$. 
Since chains in $P$ are chains in $P^{op}$, too, we obtain that $\mathcal{D}(P,\BG)\cong \mathcal{D}(P^{op},\BG^{op})$. 
\end{remark}

\section{Properties of nice decomposition sets}
\label{ch2b}
Starting with a decomposition of an interval $[x,y]$ we obtain a decomposition of subintervals 
$[u,v]\subseteq [x,y]$ in the following way:

For $x\leq u,v \leq y$, $(u_1,u_2)=\psi^{-1}(u)$ and $(v_1,v_2)=\psi^{-1}(v)$, the triple  
$(u,\psi(v_1,u_2,v)$ is a decomposition of the interval $[u,v]$, because  
\begin{align*} 
[u,\psi(v_1,u_2)]\times [\psi(v_1,u_2),v]  
&\cong[(u_1,u_2),(v_1,u_2)]\times [(v_1,u_2),(v_1,v_2)] \\
&\cong [u_1,v_1]\times [u_2,v_2]  \\ 
&\cong[(u_1,u_2),(v_1,v_2)] . \\
 \end{align*}

In particular the isomorphisms $ [u,\psi(v_1,u_2)] \times [\psi(v_1,u_2),v ]\xrightarrow{\sim} [u,v]$ 
is just the restriction of $\psi$ to the subinterval $[(u_1,u_2),(v_1,v_2)]$ and its image $[u,v]$ under $\psi$. 
The situation is illustrated in Figure \ref{fig:lem}. 

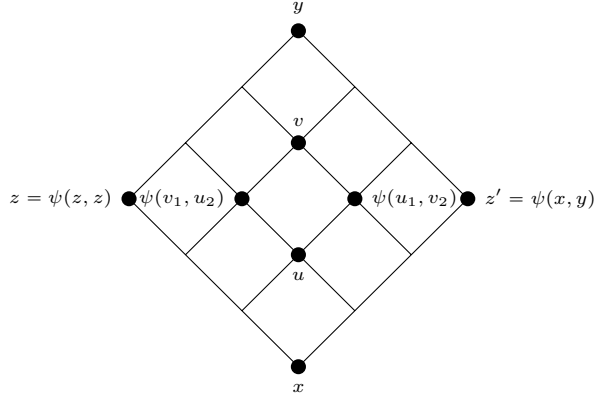
\begin{figure}[h]
\begin{tikzpicture}[rotate=45,scale=0.7]
\tikzstyle{every node}=[font=\scriptsize]
\draw (0,0) grid [step=1.5cm] (4.5,4.5);
\node[knoten,label= left:{$\psi(v_1,u_2)$}] at (1.5,3){};
\node[knoten,label= right:{$\psi(u_1,v_2)$}] at (3,1.5){};
\node[knoten,label=above:{$v$}] at (3,3) {};
\node[knoten,label=below:{$u$}] at (1.5,1.5) {};
\node[knoten,label=below:{$x$}] at (0,0) {};
\node[knoten,label=above:{$y$}] at (4.5,4.5) {};
\node[knoten,label= right:{$z'=\psi(x,y)$}] at (4.5,0){};
\node[knoten,label= left:{$z=\psi(z,z)$}] at (0,4.5){};
\end{tikzpicture}
\caption{A pattern of an excerpt of a Hasse-diagramm}
\label{fig:lem}
\end{figure}

We order the set of decompositions by defining that a decomposition is smaller than another
 if its isomorphism is the restriction of the other isomorphism by the above construction. 
In particular the decomposed interval is a subinterval of the other interval. 
It is easy to check that this generates an order relation on 
the set of decompositions of intervals in $P$. \label{2b}

\begin{definition}
A proper decomposition $(x,z,y)$ is said to be \emph{minimal with respect to} $A \subseteq P$ if it is minimal 
among all proper decompositions for which $x,y \in A$. 

Given a decomposition $(x,z,y)$, its \emph{complementary decomposition} is $(x,\psi(x,y),y)$. 
It is easy to see that a decomposition if proper if and only if its complementary decomposition if proper. 
A decomposition set $\BG$ is called \emph{symmetric} if for any decomposition its complementary decomposition 
is contained in $\BG$, too. 
\end{definition}
%Note that any decomposable the interval $ [x,y]$ can be decomposed by 
%\begin{displaymath}
%[x,y]\cong [x,z] \times [x,\psi(x,y)].
%\end{displaymath}
Of course the isomorphims for complementary decompositions are closely related. 
So it left to the reader to check, that minimality is symmetric in the sense that $(x,z,y)$ is minimal resp. 
$\AGerz$ if and only if 
$(x,\psi(x,y),y)$ is minimal resp. $\AGerz$. 

\begin{lemma}
\label{lem:min}
Let $\BG$ be a decomposition set, which is symmetric and downwards closed and 
$A$ be a subset with $\langle A \rangle_{\BG} =A$. 
Then a decomposition $(x,z,y)$ of $\BG$ is minimal with respect to $A$ if and only if 
$[x,y]\cap A \subseteq \{x,z,\psi(x,y),y\}$. 
\end{lemma}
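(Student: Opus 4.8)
The plan is to prove the two implications separately, using the order structure on decompositions introduced in Section~\ref{2b} together with the hypotheses that $\BG$ is symmetric and downwards closed and that $A$ is $\BG$-closed.

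\medskip

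\emph{First, the forward direction.} Suppose $(x,z,y)$ is minimal with respect to $A$. I want to show $[x,y]\cap A\subseteq\{x,z,\psi(x,y),y\}$. Assume toward a contradiction that there is $w\in[x,y]\cap A$ with $w\notin\{x,z,\psi(x,y),y\}$. Writing $(w_1,w_2)=\psi^{-1}(w)$, the construction at the start of Section~\ref{ch2b} produces from $(x,z,y)$ a decomposition of the subinterval $[x,w]$, namely $(x,\psi(w_1,z_2),w)$ where I use $\psi^{-1}(z)=(z_1,z_2)$ appropriately — more precisely the decomposition of $[x,w]$ whose isomorphism is the restriction of $\psi$. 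Since $\BG$ is downwards closed, this smaller decomposition lies in $\BG$; its endpoints $x,w$ both lie in $A$. It is strictly smaller than $(x,z,y)$ because its decomposed interval $[x,w]$ is a proper subinterval of $[x,y]$. So by minimality of $(x,z,y)$ this new decomposition must be \emph{trivial}, which forces the ``cut point'' $\psi(w_1,z_2)$ to equal $x$ or $w$. Chasing through what $\psi$ does on coordinates (it sends $(u,z)\mapsto u$ and $(z,v)\mapsto v$), the first case pins down $w$ to lie on the ``$z$-fiber'' and the second on the complementary fiber; combined with the symmetric statement obtained by also decomposing $[w,y]$ (again in $\BG$ since $\BG$ is downwards closed), one concludes $w$ must be one of $x$, $z$, $\psi(x,y)$, $y$ — the contradiction. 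Here I will use the remark preceding the lemma that minimality is symmetric under passing to the complementary decomposition, so that both the $z$-case and the $\psi(x,y)$-case are handled uniformly.

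\medskip

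\emph{Second, the converse.} Suppose $[x,y]\cap A\subseteq\{x,z,\psi(x,y),y\}$ and that $(x,z,y)$ is a proper decomposition in $\BG$. Let $(x',z',y')$ be any proper decomposition in $\BG$ with $x',y'\in A$ and $(x',z',y')\le(x,z,y)$; I must show equality. By definition of the order, $[x',y']\subseteq[x,y]$, so $x',y'\in[x,y]\cap A\subseteq\{x,z,\psi(x,y),y\}$. Now run through the (few) possibilities for the pair $\{x',y'\}$: using $x'<y'$ and the product structure, most pairs give an interval $[x',y']$ that is a chain (e.g.\ $[z,y]$, $[x,z]$, $[z,\psi(x,y)]$ are chains inside the grid picture of Figure~\ref{fig:lem}) and hence admits no proper decomposition at all, contradicting properness of $(x',z',y')$ — unless the complementarity hypothesis is used: the point is that $A$ being $\BG$-closed and containing $x,y$ forces $z\in A$, and symmetry forces $\psi(x,y)\in A$, so these four points really are all of $[x,y]\cap A$ and no ``intermediate'' endpoints are available. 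The only remaining pair is $\{x',y'\}=\{x,y\}$, i.e.\ $(x',z',y')$ decomposes $[x,y]$ itself and is $\le(x,z,y)$; since the isomorphism of $(x',z',y')$ is a restriction of that of $(x,z,y)$ and both have the same underlying interval $[x,y]$, the restriction is the whole map, so $(x',z',y')=(x,z,y)$.

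\medskip

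\emph{Main obstacle.} The delicate part is the bookkeeping in the forward direction: tracking a candidate element $w\in[x,y]\cap A$ through $\psi^{-1}$, identifying the induced sub-decomposition of $[x,w]$ (and of $[w,y]$), and checking that ``this induced decomposition is trivial'' really does force $w\in\{x,z,\psi(x,y),y\}$ and nothing more. This is exactly the coordinate chase illustrated by Figure~\ref{fig:lem}, and the symmetry hypothesis on $\BG$ (so that $\psi(x,y)$ plays the same role as $z$) together with downward closedness (so that all the induced sub-decompositions are legitimately in $\BG$) are precisely what make the argument close up; the converse direction is then essentially a finite case check once one knows the only available endpoints are the four listed points.
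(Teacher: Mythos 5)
Your forward direction is essentially the paper's proof: given $a \in [x,y]\cap A$, pass to the two induced decompositions of $[x,a]$ and $[a,y]$, note that they lie in $\BG$ by downward closedness and have their endpoints in $A$, invoke minimality to force both to be trivial, and read off from the coordinates that $a \in \{x,z,\psi(x,y),y\}$. The coordinate chase you defer is exactly the one the paper carries out, so this half is acceptable as a plan.

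The converse, however, contains a genuine error. You claim that for endpoint pairs other than $\{x,y\}$ the interval $[x',y']$ ``is a chain \dots\ and hence admits no proper decomposition at all.'' This is false: $[x,z]$ and $[z,y]$ are arbitrary intervals of $P$ (take $P=B_4$, $x=\emptyset$, $z=\{1,2\}$, $y=\{1,2,3,4\}$; then $[x,z]\cong B_2$ is not a chain and does admit a proper decomposition). The grid of Figure~\ref{fig:lem} draws each factor as a segment only schematically. Moreover $[z,\psi(x,y)]$ is not an interval at all, since $z$ and $\psi(x,y)$ are incomparable whenever the decomposition is proper. The step you are missing is the one the paper actually uses: if $(x',z',y')$ is a lesser \emph{proper} decomposition with $x',y'\in A$, then it lies in $\BG$ by downward closedness, so $\langle A\rangle_\BG=A$ forces the \emph{middle} element $z'$ into $A$, hence $z'\in[x,y]\cap A\subseteq\{x,z,\psi(x,y),y\}$; since $x'<z'<y'$ and the only three-element chains in this four-element set are $x<z<y$ and $x<\psi(x,y)<y$, one gets $x'=x$ and $y'=y$, whence $(x',z',y')=(x,z,y)$ as you note at the end. (One could instead observe that the restriction of $\psi$ to any proper subinterval with endpoints among the four listed points is a trivial decomposition, so no proper decomposition below $(x,z,y)$ has such endpoints --- but that is not the argument you gave, and ``$[x',y']$ is a chain'' cannot be repaired as stated.)
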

\begin{proof}
Let $(x,z,y)$ be a decomposition, which is minimal with respect to $A$. 
Since the decomposition set $\BG$ is symmetric, $(x,\psi(x,y),y)$ belongs to $\BG$ as well. 
Thus $A$ contains both $z$ and $\psi(x,y)$, because it is closed under decompositions of $\BG$. 
Let $a$ be an element of $A\cap [x,y]$ and let $(u,v)$ be its image under $\psi$.
Then both the smaller decompositions 
$(x,\psi(u,x),a)$ and $(a,\psi(z,v),y)$ 
belong to $\BG$ as well, because it is downwards closed. 
Thus there is a contradiction to minimality unless those decompositions are trivial which means
\begin{displaymath}
(u,x) \in \{ \psi^{-1}(x), \psi^{-1}(z) \} \text{ and } 
(z,v) \in \{ \psi^{-1}(z), \psi^{-1}(y) \} .
\end{displaymath}
Thus both $u \in \{ x,z \}$ and $v \in \{ z,y \}$. 
Those four possibilities belong to the cases where $a$ equals $x,y,z$ or $\psi(x,y)$.

For the other implication, let $(x,z,y)$ be a decomposition such that $[x,y]\cap A \subseteq \{x,z,\psi(x,y),y\}$. 
Assume there is a lesser, proper decomposition $(u,z',v)$. 
Then in particular $u,v  \in [x,y]\cap A \subseteq \{x,z,\psi(x,y),y\}$. 
This lesser decomposition is contained in $\BG$, too, because $\BG$ is downwards closed. 
Since $A$ is closed under decompositions of $\BG$ we obtain $z'\in A$, which means, that 
$z'$ is also an element of $[x,y]\cap A$.
Since $u< z' < v$, we can conclude, that $u=x$ and $v=y$. 
\end{proof}

Let $\BG$ be symmetric and downwards closed. 
\begin{remark}
\label{rem:seq}
Consider the following construction. 
For $\AGerz  \subseteq P$ one can develop a finite sequence of decomposition sets 
\begin{displaymath}
\label{sequence}
\Gmin=\BG_0\subset \ldots \subset \BG_k=\BGG
\end{displaymath}
such that $\BG_{i+1}$ is the union of $\BG_{i}$ and the lower hull of a pair $(x,z,y),(x,\psi(x,y),y)$ of decompositions, 
which are minimal resp. $\langle A \rangle_{\BG_i}$. 
If there are no more decompositions of $\BG$, which are minimal resp. $\AGerz$, 
the sequence ends with a decomposition set~$\BGG$. 
\end{remark}
\begin{lemma}
\label{lem:bigenough}
$\BGG$ is big enough such that $\langle A \rangle_ \BGG = \AGerz$.
\end{lemma}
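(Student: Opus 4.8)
The plan is to prove the two inclusions $\AGGerz\subseteq\AGerz$ and $\AGerz\subseteq\AGGerz$ separately. For the first I would check that $\BGG\subseteq\BG$ by induction along the chain $\Gmin=\BG_0\subset\dots\subset\BG_k=\BGG$ of Remark~\ref{rem:seq}: the trivial decompositions lie in $\BG$, and in passing from $\BG_i$ to $\BG_{i+1}$ one adjoins the lower hulls of a decomposition of $\BG$ and of its complement, the complement lying in $\BG$ because $\BG$ is symmetric and the lower hulls because $\BG$ is downwards closed. Once $\BGG\subseteq\BG$ is known, every $\BG$-closed subset of $P$ is in particular $\BGG$-closed, so the smallest $\BGG$-closed set containing $A$ is contained in the smallest $\BG$-closed one; that is the inclusion $\AGGerz\subseteq\AGerz$.

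For the reverse inclusion put $B:=\AGGerz$. Since $A\subseteq B$ and $\langle\cdot\rangle_\BG$ is a closure operator (Remark~\ref{rem:closure}), it is enough to show that $B$ is closed under all decompositions of $\BG$. Assume it is not, and among all proper decompositions $(x,z,y)\in\BG$ with $x,y\in B$ and $z\notin B$ pick one that is minimal in the order on decompositions of intervals introduced in Section~\ref{ch2b}. The key claim is that then $[x,y]\cap B\subseteq\{x,\psi(x,y),y\}$. First one shows $[x,z]\cap B=\{x\}$ and $[z,y]\cap B=\{y\}$: if $a\in B$ with $x\neq a\in[x,z]$, then restricting $\psi$ to the subinterval $[a,y]$ yields the decomposition $(a,z,y)$, which is proper, lies in $\BG$ (as $\BG$ is downwards closed), has endpoints in $B$ and middle $z\notin B$, and is strictly smaller than $(x,z,y)$ --- contradicting minimality; the statement for $[z,y]$ is symmetric. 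Consequently any further element $a\in([x,y]\cap B)\setminus\{x,z,\psi(x,y),y\}$ is incomparable with $z$, and restricting $\psi$ to a suitable subinterval ($[x,a]$ when the first coordinate of $\psi^{-1}(a)$ exceeds $x$, otherwise $[a,y]$) produces yet another strictly smaller proper decomposition of $\BG$ with endpoints in $B$ whose middle element lies in $[x,z]\setminus\{x\}$ or in $[z,y]\setminus\{y\}$, hence outside $B$ --- the same contradiction. So no such $a$ exists.

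Given $[x,y]\cap B\subseteq\{x,\psi(x,y),y\}$ (and $z\notin B$), the decomposition $(x,z,y)$ is in fact minimal with respect to $B$: a strictly smaller proper decomposition would have both endpoints among $x,\psi(x,y),y$, so its interval would be $[x,\psi(x,y)]$ or $[\psi(x,y),y]$, and under $\psi$ each of these is identified with a copy of $[z,y]$, resp.\ of $[x,z]$ (one factor of the product being a single point), on which every decomposition is trivial. But the chain of Remark~\ref{rem:seq} terminated at $\BGG=\BG_k$, which means precisely that no proper decomposition of $\BG$ is minimal with respect to $\langle A\rangle_{\BG_k}=B$ while having its middle element outside $B$ --- otherwise one could prolong the chain by adjoining the lower hulls of that decomposition together with those of its complement (which lies in $\BG$ by symmetry), strictly enlarging $\BG_k$ since $(x,z,y)\notin\BGG$. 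Applied to $(x,z,y)$ this is the contradiction we need, so $B$ is closed under $\BG$-decompositions and the proof is complete.

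The step I expect to be the real obstacle is the squeeze $[x,y]\cap B\subseteq\{x,\psi(x,y),y\}$: it requires a careful case distinction according to where a hypothetical extra element of $B$ sits inside the product $[x,z]\times[z,y]$ and repeated use of the subinterval construction of Section~\ref{ch2b} to manufacture an ever smaller offending decomposition; the two ``axis'' cases are immediate, but ruling out the genuinely interior elements is what makes the bookkeeping delicate.
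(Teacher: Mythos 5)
Your proof is correct, and while it shares the paper's skeleton -- a minimal offending decomposition, the subinterval-restriction construction of Section~\ref{ch2b}, and the termination of the sequence from Remark~\ref{rem:seq} -- it reaches the contradiction from the opposite end. The paper takes $(x,z,y)$ minimal among $\BG\setminus\BGG$ with endpoints in $B=\AGGerz$ and middle outside $B$, argues it cannot be minimal with respect to $B$ (else the sequence would continue), invokes Lemma~\ref{lem:min} to produce a witness $a\in([x,y]\cap B)\setminus\{x,z,\psi(x,y),y\}$, and then chases three auxiliary restricted decompositions through $a$ until $z\in B$ is forced. You instead show directly, by the same restriction construction, that no such witness exists, conclude that $(x,z,y)$ \emph{is} minimal with respect to $B$, and contradict the fact that the sequence could no longer be prolonged. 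The two routes are logically dual, but yours has the genuine advantage of not applying Lemma~\ref{lem:min} to the set $B$, which at that stage is not yet known to satisfy the hypothesis $\langle B\rangle_{\BG}=B$ of that lemma; your squeeze $[x,y]\cap B\subseteq\{x,\psi(x,y),y\}$ replaces that appeal by a self-contained argument. Two small points to tidy up: when ruling out strictly smaller proper decompositions with endpoints in $\{x,\psi(x,y),y\}$, also dispose of the endpoint pair $(x,y)$ itself (the restriction of $\psi$ to $[x,y]$ is $(x,z,y)$, so nothing strictly smaller arises there); and to prolong the sequence in the exact form of Remark~\ref{rem:seq} you need the complementary decomposition $(x,\psi(x,y),y)$ to be minimal with respect to $B$ as well, which follows from the symmetry of minimality noted just before Lemma~\ref{lem:min}.
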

\begin{proof}
Assume $(x,z,y)$ is a minimal among $\BG \backslash \BGG$ such that $x,y \in \AGGerz$ but $z \notin \AGGerz$. 
This decomposition is not minimal resp. $\AGGerz$, otherwise the sequence could have been further extended. 
By Lemma \ref{lem:min}, there has to be an element $a \in \AGGerz$ such that $a \notin \{ x,z, \psi(x,y),y \}$
Let $(u,v)=\psi^{-1}(a)$. 
Thus at least one of the lesser decompositions $(x,\psi(u,x),a)$ and $(a,\psi(z,v),y)$ is a proper decomposition 
of a proper subinterval of $[x,y]$. 
By minimality choice of $(x,z,y)$ those lesser decompositions grant, that both $\psi(u,x)$ and $\psi(z,v)$ 
are contained in $\AGGerz$, too. 

There is a third decomposition $(\psi(u,x),z,\psi(z,v) )$ lesser than $(x,z,y)$. 
Again, by minimality of $(x,z,y)$ we obtain $z \in \AGGerz$, which is a contradiction.   
\end{proof}

\begin{proposition}
\label{valids}
For a symmetric, downwards closed decomposition set $\BG$ and $A\subseteq P$ the following statements are equivalent
\begin{itemize}
\item[(i)] $A \in \mathcal{D}(P,\BG),$ 
\item[(ii)] $A= \langle  C \rangle$ for any maximal chain $C\subseteq A$, 
\item[(iii)] For any pair of maximal chains $C,C'$there is a sequence $C=C_0,\ldots,C_k=C'$ such that 
$C_{i+1}=C_i  \backslash \{ z'\} \cup \{ z \}$ with 
$(x,z',y) \in \BG$ and $x,y\in \langle C_i \rangle$. 
\end{itemize}
\end{proposition}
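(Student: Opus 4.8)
The plan is to prove $(ii)\Rightarrow(i)$, then $(i)\Rightarrow(ii)$, and finally the equivalence $(ii)\Leftrightarrow(iii)$; the three tools are the closure calculus of Remark~\ref{rem:closure}, the refinement sequence of Remark~\ref{rem:seq} together with Lemma~\ref{lem:bigenough}, and Lemma~\ref{lem:min}. The implication $(ii)\Rightarrow(i)$ is immediate: $P$ is finite, so pick a maximal chain $C\subseteq A$; by $(ii)$ we get $A=\langle C\rangle_\BG\in\mathcal D(P,\BG)$ from the definition.

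For $(i)\Rightarrow(ii)$, write $A=\langle C_0\rangle_\BG$ with $C_0$ a non-empty chain. Since $z\in[x,y]$ for every decomposition $(x,z,y)$, closing a subset of $[\min C_0,\max C_0]$ under $\BG$ stays inside that interval, so $A$ is bounded with $\hat 0=\min C_0$ and $\hat 1=\max C_0$; moreover $A$ is $\BG$-closed by idempotency. For any maximal chain $C\subseteq A$ monotonicity gives $\langle C\rangle_\BG\subseteq\langle A\rangle_\BG=A$, so $(ii)$ is the assertion that all maximal chains of $A$ have $\BG$-closure equal to $A$; one of them does, namely any maximal chain refining $C_0$. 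Hence it suffices to know that any two maximal chains of $A$ are joined by elementary moves and that an elementary move does not change the $\BG$-closure of a chain. The latter is quick: if $C_{i+1}=C_i\setminus\{z'\}\cup\{z\}$ with $(x,z',y)\in\BG$ and $x,y\in\langle C_i\rangle_\BG$, then by symmetry $(x,\psi(x,y),y)\in\BG$ with $z=\psi(x,y)$, so closing $C_i$ under these two decompositions puts both $z'$ and $z$ into $\langle C_i\rangle_\BG$; since $C_{i+1}\cup\{z'\}=C_i\cup\{z\}$ and the move is reversible, $\langle C_{i+1}\rangle_\BG=\langle C_i\rangle_\BG$. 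Read the other way, the same observation yields $(iii)\Rightarrow(ii)$: moves preserve closure and every element of $A$ lies on some maximal chain, so once all maximal chains are connected their common closure contains $A$ and, $A$ being the closure of any of its maximal chains, equals $A$.

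The core is $(ii)\Rightarrow(iii)$: the maximal chains of the closed bounded poset $A$ are connected by elementary moves. Run the construction of Remark~\ref{rem:seq} on a maximal chain $\widehat C$ of $A$ with $\langle\widehat C\rangle_\BG=A$ (e.g.\ one refining $C_0$): it produces $\Gmin=\BG_0\subset\cdots\subset\BG_k\subseteq\BG$, and $A_i:=\langle\widehat C\rangle_{\BG_i}$ runs from $A_0=\widehat C$ (because $\langle\cdot\rangle_{\Gmin}=\mathrm{id}$) up to $A_k=A$ (by Lemma~\ref{lem:bigenough}). Step $i$ adjoins the lower hull of a pair $(x_i,z_i,y_i),(x_i,\psi_i(x_i,y_i),y_i)$ minimal with respect to $A_i$; using that $A_i$ is $\BG_i$-closed, Lemma~\ref{lem:min} gives $[x_i,y_i]\cap A_i\subseteq\{x_i,z_i,\psi_i(x_i,y_i),y_i\}$, and one checks $A_{i+1}=A_i\cup\{z_i,\psi_i(x_i,y_i)\}$. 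Consequently the maximal chains of $A_{i+1}$ arise from those of $A_i$ in a controlled way: the only ones that change are those passing through $x_i$ and $y_i$, each being refined by inserting $z_i$ or $\psi_i(x_i,y_i)$ strictly between them, and the two resulting chains differ by the single flip along $(x_i,z_i,y_i)$, a legal elementary move since $x_i,y_i\in A_i\subseteq\langle C\rangle_\BG$ for the relevant chains $C$. Descending the filtration and using reversibility, every maximal chain of $A=A_k$ is move-connected to $\widehat C=A_0$, hence any two are connected to each other.

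The step I expect to be the main obstacle is the bookkeeping just sketched: showing that adjoining the lower hull of a single minimal pair enlarges $\langle\widehat C\rangle$ by exactly the two elements $z_i,\psi_i(x_i,y_i)$ with no cascade, and that the family of maximal chains therefore changes only by the described local refinement and flip. This is precisely where the more technical results of Chapter~\ref{ch2b} — restriction of decompositions to subintervals and the symmetry of minimality — must be used with care; granted these, the induction on the length of the refinement sequence closes the argument.
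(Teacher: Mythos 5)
Your proposal is correct and follows essentially the same route as the paper: the easy directions via the closure calculus of Remark~\ref{rem:closure}, and the hard direction via the filtration $\Gmin=\BG_0\subset\cdots\subset\BG_k$ of Remark~\ref{rem:seq} together with Lemma~\ref{lem:bigenough} and Lemma~\ref{lem:min}, tracking how maximal chains of $\langle C\rangle_{\BG_i}$ change by local flips along the adjoined minimal pair. The only difference is cosmetic routing (you prove $(i)\Rightarrow(ii)$ and $(ii)\Rightarrow(iii)$ where the paper proves $(i)\Rightarrow(iii)$ and $(iii)\Rightarrow(ii)$), and the bookkeeping step you flag as delicate is exactly the one the paper also passes over briefly.
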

\begin{proof}
(iii) $\Rightarrow$ (ii)
For any pair $C,C'$ of maximal chains in $A$, the chains in the sequence are constructed such that 
$C_{i+1}\subseteq \langle C_i\rangle_\BG$, because $A=\langle A \rangle_\BG$. 
By the properties of Remark \ref{rem:closure} we recursively obtain $C'\subseteq \langle C\rangle_\BG$. 
Thus all maximal chains of $A$ are contained in $\langle C\rangle_\BG$.   

(i) $\Rightarrow$ (iii)
W.l.o.g. the generating chain $C$ is maximal in $A$. 
Using the construction of Remark \ref{rem:seq}, let $\Gmin=\BG_0\subseteq \ldots\subseteq \BG_k= \BGG$ 
be a sequence of symmetric and downwards closed decomposition sets such that 
there is only one pair of decompositions $(x_i,z_i,y_i),(x_i,z_i',y_i)$ in $\BG_{i+1}\backslash \BG_{i}$, 
which are minimal respective $\langle C \rangle_{\BG_i}$ 
and $\langle C \rangle_{\BG_k}=\langle C \rangle_{\BG}$. 
Lemma \ref{lem:bigenough} guaranties, that the latter decomposition set $\BGG$ can be chosen 
big enough such that $\CGGerz = \CGerz=A$. 
By construction and Lemma $\ref{lem:min}$ we know 
$| \langle C \rangle_{\BG_{i+1}} \backslash \langle C \rangle_{\BG_i} | \leq 1$. 

Let $C'$ be a maximal chain in $A=\langle C \rangle_{\BGG}$. 
Observe that maximal chains in $A$ containing $x_i$ and $y_i$ contain either $z_i$ or $z'_i$, 
the unique connected components of this interval.   
Construct maximal chains $C_i$ in $A$ recursively by setting 
\begin{displaymath}
C_k:=C' \text{ and } C_{i}:=
\begin{cases}
C_{i+1} &\text{ if } z_i \in \langle C \rangle_{\BG_i} \\
C_{i+1}\backslash \{z'_i\} \cup \{z_i\}&\text{ if } z'_i \in \langle C \rangle_{\BG_i}.
\end{cases}
\end{displaymath}
Minimality of the used decompositions guarantee that we obtain chains again. 
Every exchange of $z'_i$ with $z_i$ belongs to the change of incomparable elements of a subinterval $[x_i,y_i]$of $A$, 
whose size is $4$. 
Thus those chains are maximal again, too.  
In the end of this sequence there is only one possible chain left, namely $C_0=C$.

(ii) $\Rightarrow$ (i)
This implication is obvious. 
\end{proof}

\begin{remark}
\label{partofa}
For any set $A=\Aerz$, one easily obtains an equivalence relation on the set of maximal chains from $(iii)$ by defining 
two maximal chains to be equivalent if there is such a sequence leading from one to the other. 
The union of the chains in a specific class satisfies the properties of Proposition \ref{valids}. 
\end{remark}

\begin{proposition}
A set $A$ satisfying the conditions of Proposition \ref{valids} for a decomposition set $\BG$ is a lattice. 
\end{proposition}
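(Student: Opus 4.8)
The plan is to reduce, using Proposition~\ref{valids}, to an induction on $|A|$ whose inductive step adjoins a single ``corner'' element to a smaller set of the same type. Throughout write $A=\langle C\rangle$ for a fixed maximal chain $C\subseteq A$ (Proposition~\ref{valids}(ii)). First I would record that $A$ is finite and has a bottom and a top: every application of $\langle\cdot\rangle_{\BG}$ inserts only elements lying strictly between two elements already present, so $A\subseteq[\min C,\max C]$ in $P$, and hence $\hat{0}:=\min C$, $\hat{1}:=\max C$ are the least and greatest elements of $A$. A finite poset with a top element in which every two elements have a meet is automatically a lattice (the join of $a,b$ is the meet of their common upper bounds, a set which is nonempty since it contains $\hat{1}$), so it suffices to produce meets.

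For the induction on $|A|$ the case of a chain is trivial. Otherwise, as in the proof of Proposition~\ref{valids}, run the construction of Remark~\ref{rem:seq}: there is a symmetric, downward closed $\BG'\subsetneq\BG$ with $A':=\langle C\rangle_{\BG'}\in\mathcal D(P,\BG')$ and $|A\setminus A'|\le 1$, and stepping down the sequence $\BG_0\subset\dots\subset\BG_k=\BG$ we may assume $A=A'\cup\{z\}$ with $z\notin A'$. Here $z$ arises when processing a pair of decompositions $(x,z,y),(x,z',y)\in\BG$ minimal with respect to $A'$; since only one of $z,z'$ is new, $z'=\psi(x,y)\in A'$, and --- arguing as in the proof of Proposition~\ref{valids} via Lemma~\ref{lem:min} --- one has $[x,y]\cap A=\{x,z,z',y\}$ and $[x,y]\cap A'=\{x,z',y\}$, i.e.\ under $\psi$ these are the four corners of $[x,z]\times[z,y]$ with $z$ the corner $(\max,\min)$. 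By the inductive hypothesis $A'$ is a lattice, so it remains to check that adjoining $z$ preserves this.

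The decisive step is a ``localization'' of $z$: I claim every element of $A'$ that is $\le z$ is already $\le x$, and every element of $A'$ that is $\ge z$ is already $\ge y$, so that the down-set of $z$ in $A$ is $[\hat{0},x]_A\cup\{z\}$ and its up-set is $\{z\}\cup[y,\hat{1}]_A$. The part asserting that no element of $A'$ lies strictly between $x$ and $z$, or between $z$ and $y$, is immediate from $[x,y]\cap A=\{x,z,z',y\}$; the general comparability statement is the substantive one, and I expect it to follow from the minimality of $(x,z,y)$ together with the order on decompositions of Section~\ref{ch2b}: a decomposition whose closure would force an element $c<z$ into $A'$ has, in its lower hull (via the restriction construction at the start of Section~\ref{ch2b}), a decomposition forcing $z$ into $A'$ as well, contradicting $z\notin A'$. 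Granting the localization, meets are computed directly. For $a,b\in A'$ one gets $a\wedge_A b=a\wedge_{A'}b$: if $z\le a$ and $z\le b$ then $a,b\ge y$, hence $z<y\le a\wedge_{A'}b$, so the extra candidate $z$ does not change the meet. For $a\in A'$ one gets $a\wedge_A z=z$ if $a\ge y$,\; $a\wedge_A z=a$ if $a\le x$, and otherwise $a\wedge_A z=a\wedge_{A'}x$, because in that case the common lower bounds of $\{a,z\}$ in $A$ are exactly those of $\{a,x\}$. All of these exist since $A'$ is a lattice, completing the induction.

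Thus the main obstacle is the localization claim; the remainder is lattice-theoretic bookkeeping together with facts already in hand (Proposition~\ref{valids}, Lemma~\ref{lem:min}). I would expect the localization to require genuine use of the partial order on decompositions and of how closures grow along $\BG_0\subset\dots\subset\BG_k$, rather than anything deeper.
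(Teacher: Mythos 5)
Your overall architecture --- passing to the sequence of Remark~\ref{rem:seq}, adjoining one new element $z$ per step, and reducing to the existence of meets via the top element $\max C$ --- is the same as the paper's. But the step you yourself single out as the crux, the ``localization'' claim that every $c\in A'$ with $c\le z$ already satisfies $c\le x$ (and dually above $y$), is not merely left unproven: it is false. Take $P=B_3$, the power set lattice of $\{1,2,3\}$ (writing $13$ for $\{1,3\}$, etc.), $\BG=\Gmax$ and $C=\{\emptyset,1,12,123\}$, so that $A=\langle C\rangle_{\BG}=B_3$. A legitimate run of the construction of Remark~\ref{rem:seq} adds, in order, $2$ (via the pair $(\emptyset,1,12),(\emptyset,2,12)$), then $23$ (via $(2,12,123),(2,23,123)$), then $3$ (via $(\emptyset,2,23),(\emptyset,3,23)$), and only at the last step $13$, via the pair $(1,12,123),(1,13,123)$, which is indeed still minimal with respect to $A'=B_3\setminus\{13\}$ because $[1,123]\cap A'=\{1,12,123\}$. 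At this step $x=1$, $y=123$, $z=13$, yet $3\in A'$ satisfies $3<13=z$ while $3\not\le 1=x$; so the down-set of $z$ in $A$ is not $[\hat 0,x]_A\cup\{z\}$, and your meet formula returns $3\wedge_{A'}1=\emptyset$ for $3\wedge_A 13$ although the true meet is $3$. The reason your heuristic for the localization cannot work is that minimality of $(x,z,y)$ (Lemma~\ref{lem:min}) only controls $A\cap[x,y]$; it says nothing about lower bounds of $z$ lying outside the interval $[x,y]$, which is exactly where the counterexample lives.

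The paper avoids localization altogether: to produce $z\wedge u$ it notes that $x\wedge u$ exists by induction and is a common lower bound of $z$ and $u$, takes two maximal common lower bounds $v,w$ of $z$ and $u$ in $A$, and, if neither equals $z$, uses the inductively available join $v\vee w$ in $A'$ --- again a common lower bound of both --- to force $v=w$ by maximality, so that the maximal common lower bound is unique and is the meet. If you want to rescue your computation you must replace the localization claim by an argument of this kind that handles \emph{all} lower bounds of $z$, not only those inside $[x,y]$; as stated, the inductive step of your proof breaks down.
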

\begin{proof}
Let $A=\CGerz$. 
Again, we consider a sequence of decomposition sets, as in Remark \ref{rem:seq}, 
$\Gmin = \BG_0\subseteq \ldots \subseteq \BG_k = \BGG$ such that $\langle C \rangle_{\BGG}=\langle C \rangle_{\BG}$ and 
there are only two (complementary) decomposition in $\BG_{i+1}\backslash \BG_i$, 
which are minimal with respect to $\langle C \rangle_{\BG_i}$. 
Again, by this choice we obtain $|\langle C \rangle_{\BG_{i+1}} \backslash \langle C \rangle_{\BG_{i}}| \leq 1$. 

We will show that $\langle C\rangle_{\BGG}=\langle C\rangle_{\BG}$ is a lattice by induction on $i$. 
For the start of the induction we see that $\langle C \rangle_{\Gmin}= C$, which is a chain and thus a lattice in particular. 
Now assume, that $\langle C\rangle_{\BG_i}$ is a lattice. 
Let  $ \{ \langle C \rangle_{\BG_{i+1}} \backslash \langle C \rangle_{\BG_{i}} \}
= \{ z'_i \}$ and $u \in \langle C \rangle_{i+1}$. 
For showing that $z'_i \wedge u$ exists, note that by the induction assumption $x_i \wedge u$ exists and
$x_i \wedge u \leq z',u$. 
So $P_{\leq z'_i} \cap P_{\leq u}$ is not empty. 
Let $v,w$ be maximal elements of $P_{\leq z'_i} \cap P_{\leq u}$. 
Unless either $v=z'_i$ or $w=z'_i$, their join $v \vee w$, which exists by induction, has the property 
that $v \vee w \leq z'_i,u$, since both $v,w \leq z_i',u$. 
Thus $v \vee w=v=w=z'_i \wedge u$, because $v,w$ were chosen maximal. 
\end{proof}

\section{realizations}
\label{ch3}
\begin{definition}[Realizations of decomposition sets]
Let $\BG$ be a symmetric, downwards closed decomposition set of $P$. 
A $\BG$-realization is an embedding $\phi : P \rightarrow B_n$ 
of $P$ to the power set lattice of some finite set $[n]:=\{ 1, \ldots , n\}$ 
such that for all pairs of complementary decompositions $(x,z,y),(x,z',y)\in \BG$ the following holds: 
\begin{displaymath}
\phi(x)=\phi(z)\cap \phi(z') \quad \text{   as well as   } \quad
\phi(y)=\phi(z)\cup \phi(z').
\end{displaymath}
If there exists such an embedding $P$ is called $\BG$-realizable.   
$\phi$ is called the \emph{realization of the decomposition set}. 
\end{definition}

\begin{example}
Any poset is $\Gmin$- realizable by enumerating the elements of $P$ by $x_1,\ldots, x_n$ 
and setting 
$i\in \phi(x)$ if and only if $x_i \leq x$ in $P$.
\end{example}

\begin{definition}[Polytopal pseudo-complexes]
A \emph{polytopal pseudo-complex} $\mathcal{F}$ 
is a finite set of polytopes such that faces of polytopes of $\mathcal{F}$ are 
contained in $\mathcal{F}$ and intersections of polytopes of $\mathcal{F}$
are unions of polytopes of $\mathcal{F}$. 
\end{definition}
The only difference to the definition of \emph{polyhedral/polytopal complexes} \cite{Z95} is, that intersections are allowed to be 
unions of polytopes instead of just single polytopes. 

\label{incid}
For $A \in B_n$ let us denote its incidence vector $e_A \in \{0,1\}^n$ by 
$(e_A)_i=1$ if an only if $i \in A$. 
We assign a polytope to $A\subseteq P$ by setting 
\begin{displaymath}
\Gamma(A):= \conv(e_A)=\{ \sum_{x\in A} \lambda_x \cdot e_{\phi(x)} ~ | ~ \lambda_x > 0, \sum \lambda_x=1\}.
\end{displaymath}

\begin{definition}[Realizations of decomposition complexes]
Let $\mathcal{D}(P,\BG)$ be a decomposition complex with $\BG$-realization $\phi$. 
Then clearly, the set of polytopes
\begin{displaymath}
\widetilde{\mathcal{D}}(P,\BG):=~ \{ \Gamma(A) ~ | ~  A \in \mathcal{D}(P,\BG) \}
\end{displaymath} 
forms a polytopal pseudo-complex. 
We call it a \emph{realization of the decomposition complex} of $P$ respective $\BG$.
\end{definition}
Though $\Gamma(A)\cap\Gamma(B)=\Gamma(A\cap B)$ is always true, the statement 
$A\cap B\in \mathcal{D}(P,\BG)$ may not be true for $A,B\in \mathcal{D}(P,\BG)$, see Example \ref{ex:maynot}. 
In fact $\Gamma(A\cap B)$  is the union of polytopes of maximal faces in $A\cap B$, see Remark \ref{partofa}. 

\begin{example}
\label{ex:maynot}
Let $P$ be the poset below. 
Then, with respect to the maximal decomposition set, the closures under $\langle \cdot \rangle$ of the chains $\{0,a,c,d\}$ and $\{0,b,c,d\}$ coincide. 
The same is true for the chains $\{0,a,c,e\}$ and $\{0,b,c,e\}$. 
The intersection of those two closures is $\{ 0,a,b,c\}$. 
Though closed, it is not generated by any of its maximal chains. 
Choosing a realization of $\Gmax$ (for example via the construction of Proposition \ref{prop:atom}) gives an example 
of a realization of some decompositions complex, which is no polytopal complexes. 
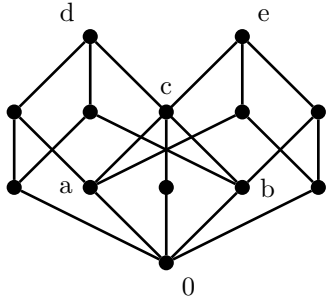
\begin{figure}[h]
\begin{tikzpicture}[line width=1,description/.style={fill=white,inner sep=2pt},node distance=1cm]
  \node [knoten](0) [label=below right:0]{};
  \node [knoten](d) [above of=0] {};
  \node [knoten](a) [left of=d,label=left:a]{};
  \node [knoten](b) [right of=d,label=right:b]{};
  \node [knoten](c) [above of=d,label=above:c]{};
  \node [knoten](e) [left of=c]{};
  \node [knoten](f) [left of=e]{};
  \node [knoten](i) [left of=a]{};
  \node [knoten](j) [right of=b]{};
  \node [knoten](g) [right of=c]{};
  \node [knoten](h) [right of=g]{};
  \node [knoten](1) [above of=e,label=above left:d]{};
  \node [knoten](2) [above of=g,label=above right:e]{};
  \path[-] (0) edge (a) edge (c) edge (b) edge (i) edge (j);
  \path[-] (1) edge (b) edge (e) edge (f);
  \path[-] (2) edge (g) edge (a) edge (h);
  \path[-] (e) edge (b);
  \path[-] (g) edge (a);
  \path[-] (i) edge (e) edge (f);
  \path[-] (j) edge (g) edge (h);
  \path[-] (b) edge (h);
  \path[-] (a) edge (f);
\end{tikzpicture}
\caption{A poset $P$}
\label{fig:pseudo}
\end{figure}
\end{example}

\begin{proposition}
Let $\BG$ be a decomposition set with realization $\phi$, which contains a pair of complementary decompositions 
$(x,z,y),(x,z',y)$. Then 
\begin{displaymath}
\Gamma(\{x,z,z',y\})  = \Gamma(\{x,z,y\})  \cup \Gamma(\{x,z',y\}).
\end{displaymath}
\end{proposition}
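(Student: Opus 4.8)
The plan is to reduce everything to a single affine identity among the four incidence vectors involved, and then close the argument with a one-line convexity manipulation.

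First I would fix notation: write $a := e_{\phi(x)}$, $b := e_{\phi(z)}$, $c := e_{\phi(z')}$, $d := e_{\phi(y)}$ for the four vertices in $\R^n$, so that $\Gamma(\{x,z,z',y\}) = \conv\{a,b,c,d\}$ while the two polytopes on the right are $\conv\{a,b,d\}$ and $\conv\{a,c,d\}$. The inclusion $\supseteq$ is immediate, since $\{x,z,y\}$ and $\{x,z',y\}$ are subsets of $\{x,z,z',y\}$ and $\conv$ is monotone under inclusion of the generating set.

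For the reverse inclusion I would extract the key relation from the defining property of a realization. Since $(x,z,y),(x,z',y)\in\BG$ are complementary, $\phi$ satisfies $\phi(x)=\phi(z)\cap\phi(z')$ and $\phi(y)=\phi(z)\cup\phi(z')$. Passing to incidence vectors, intersection becomes coordinatewise minimum and union coordinatewise maximum; since $\min(s,t)+\max(s,t)=s+t$ in each coordinate, this yields the affine identity $a+d=b+c$, equivalently $c=a+d-b$ and $b=a+d-c$. Now take an arbitrary point $p=\alpha a+\beta b+\gamma c+\delta d\in\conv\{a,b,c,d\}$ with $\alpha,\beta,\gamma,\delta\ge 0$ and $\alpha+\beta+\gamma+\delta=1$. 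By the symmetry $z\leftrightarrow z'$ (which swaps the two triangles and the roles of $\beta,\gamma$) I may assume $\gamma\le\beta$. Substituting $\gamma c=\gamma(a+d-b)$ gives
\[
p=(\alpha+\gamma)\,a+(\beta-\gamma)\,b+(\gamma+\delta)\,d,
\]
whose three coefficients are nonnegative (using $\gamma\le\beta$) and sum to $1$; hence $p\in\conv\{a,b,d\}=\Gamma(\{x,z,y\})$. In the remaining case $\beta\le\gamma$ the symmetric substitution $\beta b=\beta(a+d-c)$ puts $p$ in $\Gamma(\{x,z',y\})$. This establishes $\subseteq$ and finishes the proof.

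I do not expect a genuine obstacle here: the only thing one has to get right is that the min/max behaviour of $0/1$-incidence vectors is precisely what converts the lattice-theoretic conditions defining a realization into the affine relation $a+d=b+c$, after which the claim is a routine convexity computation. One minor point to keep honest is that the four vertices need not be in convex position (for instance $z=z'$, or collinear configurations), but the displayed convex-hull identity holds in every case, so no case distinction beyond $\gamma\le\beta$ versus $\beta\le\gamma$ is needed.
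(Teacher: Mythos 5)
Your proof is correct and follows essentially the same route as the paper: both derive the affine relation $e_{\phi(x)}+e_{\phi(y)}=e_{\phi(z)}+e_{\phi(z')}$ from the realization conditions and then eliminate the smaller of the two middle coefficients (w.l.o.g.\ $\gamma\le\beta$) to land in one of the two triangles. Your version is in fact slightly cleaner, since the paper's final displayed coefficient contains a typo ($\lambda_1+\lambda_3$ where $\lambda_4+\lambda_3$ is meant) that your computation gets right.
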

\begin{proof}
The second inclusion is trivial. 
For the first one, let 
$v=\lambda_1 e_{\phi(x)} + \lambda_2 e_{\phi(z)} + 
\lambda_3 e_{\phi(z')} + \lambda_4 e_{\phi(y)}$ be arbitrary inside $\Gamma(\{x,z,z',y\})$. 
W.l.o.g. we can assume $\lambda_2 \geq \lambda_3$. 
Since $\phi(z')=(\phi(y)\backslash \phi(z))\cup \phi(x)$, we can express 
$v$ as $(\lambda_1 + \lambda_3) e_{\phi(x)} + (\lambda_2 - \lambda_3) e_{\phi(z)} + 
(\lambda_1 +\lambda_3) e_{\phi(y)}\in \Gamma(\{x,z,y\})$. 
\end{proof}

\begin{corollary}
\label{cor:conedec}
Let $\BG$ be a realizable decomposition set and $A\subseteq P$. 
Then the polytope $\Gamma(A)$ is the union of polytopes $\Gamma(C)$ of maximal chains $C$ inside $A$. 
\end{corollary}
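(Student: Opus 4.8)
The inclusion $\supseteq$ is immediate for any $A\subseteq P$: a maximal chain $C\subseteq A$ satisfies $\Gamma(C)\subseteq\Gamma(A)$. So the content is the reverse inclusion, and I would establish it for $A=\langle C\rangle_\BG\in\mathcal D(P,\BG)$ (the case that matters). Fix such an $A$ and write $A=\langle C\rangle_\BG$ for a \emph{maximal} chain $C$ of $A$ (possible by Proposition~\ref{valids}); by Proposition~\ref{valids}(iii) every maximal chain of $A$ has the same length $\ell:=|C|$. I would argue by induction on the \emph{defect} $|A|-\ell$. If it is $0$ then $A=C$ is a chain and there is nothing to prove. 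If it is positive, I would invoke the filtration of Remark~\ref{rem:seq}: a chain $\Gmin=\BG_0\subsetneq\dots\subsetneq\BG_k=\BGG$ of symmetric, downwards closed decomposition sets — each realizable, since the realization $\phi$ of $\BG$ is simultaneously a realization of every $\BG_i\subseteq\BG$ — such that, setting $A_i:=\langle C\rangle_{\BG_i}$, one has $A_0=C$, $A_{i+1}=A_i\cup\{z_i'\}$ with $z_i'\notin A_i$, and a complementary pair $(x_i,z_i,y_i),(x_i,z_i',y_i)$ in $\BG_{i+1}$ that is minimal with respect to $A_i$; Lemma~\ref{lem:bigenough} ensures $A_k=A$, and Lemma~\ref{lem:min} gives $[x_i,y_i]\cap A_i=\{x_i,z_i,y_i\}$ (note $z_i\in A_i$, $z_i'\notin A_i$).

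Each step of the filtration is governed by the preceding Proposition. The realization conditions $\phi(x_i)=\phi(z_i)\cap\phi(z_i')$ and $\phi(y_i)=\phi(z_i)\cup\phi(z_i')$ give the vector identity $e_{\phi(z_i)}+e_{\phi(z_i')}=e_{\phi(x_i)}+e_{\phi(y_i)}$, and using it exactly as in that Proposition one sees: any $v\in\Gamma(A_{i+1})$, written as a convex combination $v=\mu\,e_{\phi(z_i')}+\sum_{a\in A_i}\lambda_a e_{\phi(a)}$, lies in $\Gamma(A_i)$ when $\mu\le\lambda_{z_i}$ (replace $e_{\phi(z_i')}$ by $e_{\phi(x_i)}+e_{\phi(y_i)}-e_{\phi(z_i)}$) and in $\Gamma(\widetilde A_i)$ when $\mu>\lambda_{z_i}$ (replace $e_{\phi(z_i)}$ by $e_{\phi(x_i)}+e_{\phi(y_i)}-e_{\phi(z_i')}$), where $\widetilde A_i:=(A_i\setminus\{z_i\})\cup\{z_i'\}$. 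Together with the obvious reverse inclusions this yields $\Gamma(A_{i+1})=\Gamma(A_i)\cup\Gamma(\widetilde A_i)$, and iterating downwards from $i=k-1$ gives $\Gamma(A)=\Gamma(C)\cup\bigcup_{i=0}^{k-1}\Gamma(\widetilde A_i)$. Now $C$ is a maximal chain of $A$, and every maximal chain of every $\widetilde A_i$ is a chain contained in $A_{i+1}\subseteq A$, hence extends to a maximal chain of $A$; so the proof is finished as soon as the Corollary is known for each $\widetilde A_i$.

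This last point is the heart of the argument and I expect it to be the main obstacle. The plan is to show that the swap $\tau\colon A_i\to\widetilde A_i$, the identity on $A_i\setminus\{z_i\}$ and $z_i\mapsto z_i'$, is an order isomorphism which transports the realizable decomposition structure of $A_i$ onto one of $\widetilde A_i$; granting this, $\widetilde A_i\cong A_i$ is again the closure of a maximal chain under a realizable decomposition set, it shares the maximal-chain length $\ell$ with $A_i$, and since $|\widetilde A_i|=|A_i|<|A|$ its defect is strictly smaller, so the inductive hypothesis applies. For $b\in A_i\cap[x_i,y_i]=\{x_i,z_i,y_i\}$ the swap is visibly harmless, because $[x_i,y_i]\cong[x_i,z_i]\times[z_i,y_i]$ with $z_i$ and $z_i'$ the two incomparable corners, so they sit symmetrically. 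The hard part is to establish, for $b\in A_i\setminus[x_i,y_i]$, that $b$ cannot be comparable to exactly one of $z_i,z_i'$; here I would argue by cases using that $A_i$ and $A_{i+1}$ are lattices (the Proposition following Proposition~\ref{valids}), the interval description of $[x_i,y_i]$, the subinterval-decomposition constructions at the start of Chapter~\ref{ch2b}, and Lemma~\ref{lem:min} — for instance, from $b<z_i$ with $b\not\ge x_i$ one is pushed to $b\parallel x_i$ and then $b\vee x_i=z_i$ in the lattice $A_i$, after which the structure of the decomposition $(x_i,z_i,y_i)$ should also force $b<z_i'$. Carrying out this dichotomy rigorously is, I believe, precisely what the technical lemmas of Chapter~\ref{ch2b} are designed to support, and it is where the only real difficulty lies.
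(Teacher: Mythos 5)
The paper prints no proof of this corollary --- it is presented as an immediate consequence of the preceding Proposition --- so the real question is whether your iteration of that Proposition can be closed up, and it cannot in the form you propose. The reduction $\Gamma(A)=\Gamma(C)\cup\bigcup_i\Gamma(\widetilde A_i)$ is fine, but finishing by ``the Corollary for each $\widetilde A_i$'' reduces to a false statement, and the dichotomy you hope to prove (that $b\in A_i\setminus[x_i,y_i]$ cannot be comparable to exactly one of $z_i,z_i'$) is also false. Concretely, take $P=B_3$ with $\BG=\Gmax$ and the identity realization, and $C=\{\emptyset,1,12,123\}$, so $A=\langle C\rangle_{\Gmax}=B_3$. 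A legitimate filtration first adjoins $2$ via the pair $(\emptyset,1,12),(\emptyset,2,12)$, giving $A_1=\{\emptyset,1,2,12,123\}$, and may then choose the pair $(1,12,123),(1,13,123)$, which is minimal with respect to $A_1$ by Lemma \ref{lem:min} since $[1,123]\cap A_1=\{1,12,123\}$. Here $z_1=12$, $z_1'=13$ and $\widetilde A_1=\{\emptyset,1,2,13,123\}$. The element $b=2$ satisfies $2<12$ but $2\parallel 13$, so $\tau$ is not an order isomorphism; indeed $\widetilde A_1$ has maximal chains $\{\emptyset,1,13,123\}$ and $\{\emptyset,2,123\}$ of different lengths, so it is not the closure of any of its maximal chains. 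Worse, the covering claim fails for $\widetilde A_1$: the point $\tfrac12 e_{\phi(1)}+\tfrac12 e_{\phi(2)}=(\tfrac12,\tfrac12,0)$ lies in $\Gamma(\widetilde A_1)$ (reading $\Gamma$ as the closed convex hull, as the Proposition's proof implicitly does) but in neither $\Gamma(\{\emptyset,1,13,123\})$, whose points satisfy ``second coordinate $\le$ third coordinate,'' nor $\Gamma(\{\emptyset,2,123\})$, whose points satisfy ``first coordinate $=$ third coordinate''; a full-dimensional neighbourhood of this point inside $\Gamma(\widetilde A_1)$ is likewise uncovered. So your recursion bottoms out on sets for which the desired conclusion is simply unavailable.

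What the example shows is that after landing in the ``wrong branch'' $\Gamma(\widetilde A_i)$ one must continue rewriting \emph{inside $A$}, using further complementary pairs of $\BG$ whose endpoints lie in $A$ but not necessarily in $\widetilde A_i$: above, the pair $(\emptyset,1,12),(\emptyset,2,12)$ converts $\tfrac12 e_{\phi(1)}+\tfrac12 e_{\phi(2)}$ into $\tfrac12 e_{\phi(12)}$, which does lie on a maximal chain of $A$. A correct argument therefore needs a termination measure for this global rewriting of supports inside $A$ (note that the support can grow, since $x_i$ and $y_i$ are adjoined when $z_i'$ is eliminated, so ``size of support'' does not obviously decrease), rather than an induction over the sets $\widetilde A_i$. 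Your opening observations are sound --- the $\supseteq$ inclusion, the necessity of restricting to $A\in\mathcal D(P,\BG)$ (for a two-element antichain the statement is already false), and the identity $\Gamma(A_{i+1})=\Gamma(A_i)\cup\Gamma(\widetilde A_i)$ --- but the central step is not merely unfinished: the specific strategy proposed to complete it is refuted by the example above.
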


Note, that for decomposition sets $\BGG \subseteq \BG$, a $\BG-$realization $\psi$ is a $\BGG-$realization as well. 
\begin{corollary}
Since $\AGerz \subseteq \AGGerz$, the realization $\widetilde{\mathcal{D}}(P,\BGG)$ is a subdivision 
of the realization $\widetilde{\mathcal{D}}(P,\BG)$. 
\end{corollary}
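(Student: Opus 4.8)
The plan is to peel off the definition of a subdivision and reduce the claim to three ingredients already on the table: monotonicity of the closure operators, monotonicity of $\Gamma$, and the decomposition of a polytope into chain-polytopes from Corollary~\ref{cor:conedec}.

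First I would record the basic inputs. Because $\BGG\subseteq\BG$, any subset of $P$ that is closed under all decompositions of $\BG$ is in particular closed under those of $\BGG$, so the defining intersection for $\langle\cdot\rangle_{\BGG}$ runs over a larger family; hence $\langle A\rangle_{\BGG}\subseteq\langle A\rangle_{\BG}$ for every $A\subseteq P$ (this is the inclusion driving the statement). From the identity $\Gamma(A)\cap\Gamma(B)=\Gamma(A\cap B)$ one gets immediately that $\Gamma$ is monotone: $A\subseteq B$ forces $\Gamma(A)=\Gamma(A)\cap\Gamma(B)\subseteq\Gamma(B)$. Finally, since a $\BG$-realization $\phi$ is also a $\BGG$-realization, both $\widetilde{\mathcal D}(P,\BGG)$ and $\widetilde{\mathcal D}(P,\BG)$ are polytopal pseudo-complexes sitting in the same ambient cube and built from the same $\Gamma$. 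With the usual notion of subdivision it then remains to check: (a) every polytope of $\widetilde{\mathcal D}(P,\BGG)$ is contained in a polytope of $\widetilde{\mathcal D}(P,\BG)$; and (b) every polytope of $\widetilde{\mathcal D}(P,\BG)$ is a finite union of polytopes of $\widetilde{\mathcal D}(P,\BGG)$ (which also yields $|\widetilde{\mathcal D}(P,\BGG)|=|\widetilde{\mathcal D}(P,\BG)|$).

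For (a): a polytope of $\widetilde{\mathcal D}(P,\BGG)$ has the form $\Gamma(\langle C\rangle_{\BGG})$ for a non-empty chain $C$ in $P$; by monotonicity of $\Gamma$ and $\langle C\rangle_{\BGG}\subseteq\langle C\rangle_{\BG}$ it is contained in $\Gamma(\langle C\rangle_{\BG})$, which is a polytope of $\widetilde{\mathcal D}(P,\BG)$ since $\langle C\rangle_{\BG}\in\mathcal D(P,\BG)$. For (b): fix a polytope $\Gamma(\langle C\rangle_{\BG})$ of $\widetilde{\mathcal D}(P,\BG)$. By Corollary~\ref{cor:conedec} it is the union of the polytopes $\Gamma(C')$ over the maximal chains $C'$ of $\langle C\rangle_{\BG}$. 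Each such $C'$ is a non-empty chain in $P$, so $\Gamma(\langle C'\rangle_{\BGG})$ is a polytope of $\widetilde{\mathcal D}(P,\BGG)$; moreover $C'\subseteq\langle C\rangle_{\BG}$ together with Remark~\ref{rem:closure} gives $\langle C'\rangle_{\BG}\subseteq\langle\langle C\rangle_{\BG}\rangle_{\BG}=\langle C\rangle_{\BG}$, and combining with $\langle C'\rangle_{\BGG}\subseteq\langle C'\rangle_{\BG}$ and monotonicity of $\Gamma$ yields the sandwich $\Gamma(C')\subseteq\Gamma(\langle C'\rangle_{\BGG})\subseteq\Gamma(\langle C\rangle_{\BG})$. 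Hence $\Gamma(\langle C\rangle_{\BG})=\bigcup_{C'}\Gamma(\langle C'\rangle_{\BGG})$ is a finite union of polytopes of $\widetilde{\mathcal D}(P,\BGG)$, each contained in $\Gamma(\langle C\rangle_{\BG})$. Together with (a), this is exactly the assertion that $\widetilde{\mathcal D}(P,\BGG)$ is a subdivision of $\widetilde{\mathcal D}(P,\BG)$.

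I expect no serious obstacle: the argument is just a chain of applications of the closure axioms of Remark~\ref{rem:closure}, monotonicity of $\Gamma$, and Corollary~\ref{cor:conedec}, with no new estimate. The only point worth a sentence of care is bookkeeping the direction of the inclusions — the smaller decomposition set $\BGG$ yields the smaller closures and hence the \emph{finer} polytopal pseudo-complex, so it is $\widetilde{\mathcal D}(P,\BGG)$ that subdivides $\widetilde{\mathcal D}(P,\BG)$, matching the stated conclusion.
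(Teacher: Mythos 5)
Your argument is correct and is essentially the paper's own (the corollary is stated there without a written proof): cell containment follows from monotonicity of the closure operators and of $\Gamma$, and the covering property follows from Corollary~\ref{cor:conedec} applied to the maximal chains of $\langle C\rangle_{\BG}$. The only point worth flagging is that you have the inclusion in the correct direction --- for $\BGG\subseteq\BG$ one gets $\AGGerz\subseteq\AGerz$, so the paper's phrase ``$\AGerz\subseteq\AGGerz$'' appears to be a typo, and your direction is the one consistent with the conclusion that $\widetilde{\mathcal{D}}(P,\BGG)$ refines $\widetilde{\mathcal{D}}(P,\BG)$.
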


\begin{example}
\label{ex:second}
Let $P$ be the power set lattice of rank $2$ as in Example \ref{ex:first}.  
By definition of $P$, the identity is a canonical $\Gmax$-realization. 
Figure \ref{fig:quadrate} shows the realizations $\mathcal{D}(P,\Gmin)$ and $\mathcal{D}(P,\Gmax)$.
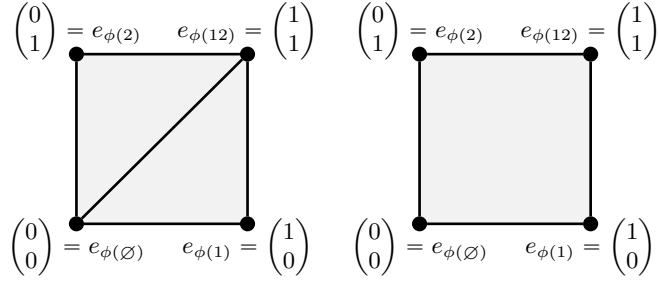
\begin{figure}[h]
{\small
\centering
\subfloat
{
\begin{tikzpicture}[line width=1,scale=0.75]
[description/.style={fill=white,inner sep=1pt}]
\filldraw[gray!10] (0,0) -- (3,0) -- (3,3) -- (0,3) -- (0,0);
  \node [knoten2](leer) at (0,0) 
[label={[label distance=-0.3cm]270:${\left(\begin{matrix}0\\ 0\end{matrix}\right)=e_{\phi(\emptyset)}}$}]{};
  \node [knoten2](2) at (0,3) 
[label={[label distance=-0.3cm]90:${\left(\begin{matrix}0\\ 1\end{matrix}\right)=e_{\phi(2)}}$}]{};
  \node [knoten2](1) at (3,0) 
[label={[label distance=-0.3cm]270:${e_{\phi(1)}=\left(\begin{matrix}1 \\ 0\end{matrix}\right)}$}]{};
  \node [knoten2](12) at (3,3) 
[label={[label distance=-0.3cm]90:${e_{\phi(12)}=\left(\begin{matrix}1\\ 1\end{matrix}\right)}$}]{};
\path[-] (leer) edge (1) edge(2) edge (12);
\path[-] (12) edge (1) edge(2);
\end{tikzpicture}
}
{
\begin{tikzpicture}[line width=1,scale=0.75]
[description/.style={fill=white,inner sep=1pt}]
\filldraw[gray!10] (0,0) -- (3,0) -- (3,3) -- (0,3) -- (0,0);
  \node [knoten2](leer) at (0,0) 
[label={[label distance=-0.3cm]270:${\left(\begin{matrix}0\\ 0\end{matrix}\right)=e_{\phi(\emptyset)}}$}]{};
  \node [knoten2](2) at (0,3) 
[label={[label distance=-0.3cm]90:${\left(\begin{matrix}0\\ 1\end{matrix}\right)=e_{\phi(2)}}$}]{};
  \node [knoten2](1) at (3,0) 
[label={[label distance=-0.3cm]270:${e_{\phi(1)}=\left(\begin{matrix}1 \\ 0\end{matrix}\right)}$}]{};
  \node [knoten2](12) at (3,3) 
[label={[label distance=-0.3cm]90:${e_{\phi(12)}=\left(\begin{matrix}1\\ 1\end{matrix}\right)}$}]{};
\path[-] (leer) edge (1) edge(2);
\path[-] (12) edge (1) edge(2);
\end{tikzpicture}
}}
\caption{Realizations of the power set lattice of rank $2$ resp. the minimal (left) and the maximal 
decomposition set(right)}
\label{fig:quadrate}
\end{figure}
\end{example}

\begin{example}
We close this section with an example of a poset, for which the maximal decomposition set is not realizable. 
Let $P$ be the face poset of a quadrangle augmented by a minimal element. 
Then the decomposition poset is not realizable, since the decomposition complex is not even graded. 
Thus it cannot even be the face poset of some regular CW-complex. 
\end{example}

\section{Products}
\label{ch4}
\begin{proposition}
For decomposition sets $\BG_1$ of $P_1$ and $\BG_2$ of $P_2$, 
\begin{displaymath}
\BG_1 \times \BG_2 := \{ ( (x_1,x_2),(z_1,z_2),(y_1,y_2) ) ~ | ~ (x_1,z_1,y_1)\in \BG_1 , (x_2,z_2,y_2)\in \BG_2  \}
\end{displaymath}
is a decomposition set of $P_1\times P_2$. 
\end{proposition}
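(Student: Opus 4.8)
The plan is to verify the two defining conditions of a decomposition set directly, namely that $\BG_1\times\BG_2$ contains all trivial decompositions of $P_1\times P_2$ and that each of its elements is in fact a decomposition of $P_1\times P_2$. For the first point: a trivial decomposition $((x_1,x_2),(z_1,z_2),(y_1,y_2))$ has $(z_1,z_2)=(x_1,x_2)$ or $(z_1,z_2)=(y_1,y_2)$; in either case, since $x_i\le z_i\le y_i$ coordinatewise, we get $z_1\in\{x_1,y_1\}$ \emph{or} — wait, more carefully: if $(z_1,z_2)=(x_1,x_2)$ then $(x_1,z_1,y_1)$ is the trivial decomposition with middle term $x_1$ and $(x_2,z_2,y_2)$ is trivial with middle term $x_2$, both of which lie in $\BG_1,\BG_2$ respectively by the decomposition-set axiom, so the product triple lies in $\BG_1\times\BG_2$; symmetrically if $(z_1,z_2)=(y_1,y_2)$. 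Hence all trivial decompositions of $P_1\times P_2$ are captured.

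For the second point, fix $(x_1,z_1,y_1)\in\BG_1$ and $(x_2,z_2,y_2)\in\BG_2$ with isomorphisms $\psi_1:[x_1,z_1]\times[z_1,y_1]\xrightarrow{\sim}[x_1,y_1]$ and $\psi_2:[x_2,z_2]\times[z_2,y_2]\xrightarrow{\sim}[x_2,y_2]$, each satisfying the normalization $(u,z_i)\mapsto u$, $(z_i,v)\mapsto v$. I would first record the standard fact that intervals in a product split as products of intervals: $[(x_1,x_2),(y_1,y_2)]_{P_1\times P_2}\cong[x_1,y_1]_{P_1}\times[x_2,y_2]_{P_2}$, and likewise $[(x_1,x_2),(z_1,z_2)]\cong[x_1,z_1]\times[x_2,z_2]$ and $[(z_1,z_2),(y_1,y_2)]\cong[z_1,y_1]\times[z_2,y_2]$. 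Then define $\psi:=$ the composite
\begin{align*}
[(x_1,x_2),(z_1,z_2)]\times[(z_1,z_2),(y_1,y_2)]
&\cong ([x_1,z_1]\times[x_2,z_2])\times([z_1,y_1]\times[z_2,y_2])\\
&\cong ([x_1,z_1]\times[z_1,y_1])\times([x_2,z_2]\times[z_2,y_2])\\
&\xrightarrow{\psi_1\times\psi_2}[x_1,y_1]\times[x_2,y_2]\\
&\cong [(x_1,x_2),(y_1,y_2)],
\end{align*}
where the middle isomorphism just permutes the four factors. One checks this is an isomorphism of posets since each stage is, and that it respects the required normalization: a typical element $((u_1,u_2),(z_1,z_2))$ on the ``first slot'' is sent, factor by factor, to $(\psi_1(u_1,z_1),\psi_2(u_2,z_2))=(u_1,u_2)$ by the normalization of $\psi_1,\psi_2$; symmetrically on the second slot. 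This is exactly the condition that $((x_1,x_2),(z_1,z_2),(y_1,y_2))$ is a decomposition of $P_1\times P_2$ with witnessing isomorphism $\psi$.

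The only genuinely fiddly point is bookkeeping the canonical isomorphism $[(a_1,a_2),(b_1,b_2)]_{P_1\times P_2}\cong[a_1,b_1]_{P_1}\times[a_2,b_2]_{P_2}$ and checking that the four-factor reshuffling is compatible with the normalizations on the nose — i.e., that no implicit sign or order ambiguity creeps in — but this is routine once one fixes the obvious bijections. I expect the main (mild) obstacle to be purely notational: keeping the two coordinates and the two ``slots'' of each decomposition straight simultaneously, so I would set up clean names for the four interval factors before composing, rather than manipulating nested pairs directly.
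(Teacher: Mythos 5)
Your proof is correct and follows essentially the same route as the paper, which simply asserts that ``elementary calculations show'' that coordinatewise pairs of decompositions give decompositions of the product and that trivial decompositions of the product correspond to pairs of trivial decompositions; you have carried out those calculations explicitly, including the correct construction of the witnessing isomorphism as $\psi_1\times\psi_2$ conjugated by the canonical factor reshuffling and the check of the normalization.
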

\begin{proof}
Elementary calculations show, that triples of pairs, for which the triples of first and second coordinates are 
decompositions, are decompositions of the product poset. 
Since trivial decompositions of the product poset correspond to pairs of trivial decompositions of the factor posets, 
products of decomposition sets contain all trivial decompositions of the product. 
\end{proof}

\begin{proposition}
The product of maximal decomposition sets is the maximal decomposition set of the product poset.
\end{proposition}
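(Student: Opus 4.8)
The plan is to prove the two inclusions separately. The inclusion $\Gmax(P_1)\times\Gmax(P_2)\subseteq\Gmax(P_1\times P_2)$ is immediate from the preceding proposition, since every triple of $P_i$ is a decomposition of $P_i$, hence every element of $\Gmax(P_1)\times\Gmax(P_2)$ is a decomposition of $P_1\times P_2$. So the real content is the reverse inclusion: I must show that an arbitrary decomposition $D=((x_1,x_2),(z_1,z_2),(y_1,y_2))$ of $P_1\times P_2$ splits as a product, i.e.\ that $(x_i,z_i,y_i)$ is a decomposition of $P_i$ for $i=1,2$.

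To that end I would fix a witnessing isomorphism $\psi\colon[(x_1,x_2),(z_1,z_2)]\times[(z_1,z_2),(y_1,y_2)]\to[(x_1,x_2),(y_1,y_2)]$ and use that intervals in a product poset are products of intervals, e.g.\ $[(x_1,x_2),(z_1,z_2)]=[x_1,z_1]\times[x_2,z_2]$. The idea is to manufacture the first-coordinate isomorphism by freezing the second coordinate at $z_2$: define $\psi_1\colon[x_1,z_1]\times[z_1,y_1]\to[x_1,y_1]$ by $\psi_1(a,b):=\pi_1\big(\psi((a,z_2),(b,z_2))\big)$, with $\pi_1$ the projection onto the first factor. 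Using that $\psi$ restricts to the identity on the two axes $(u,(z_1,z_2))\mapsto u$ and $((z_1,z_2),v)\mapsto v$, a short monotonicity argument squeezes $(a,z_2)\le\psi((a,z_2),(b,z_2))\le(b,z_2)$, so the second coordinate on the right is forced to equal $z_2$; this makes $\psi_1$ well defined into $[x_1,y_1]$, monotone, injective (because $\psi$ is), and it yields the required boundary behaviour $\psi_1(a,z_1)=a$ and $\psi_1(z_1,b)=b$. Symmetrically, freezing the first coordinate at $z_1$ produces $\psi_2\colon[x_2,z_2]\times[z_2,y_2]\to[x_2,y_2]$ with the analogous properties.

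The main obstacle is surjectivity, equivalently that $\psi_1$ is an order-isomorphism rather than merely an injective monotone map; a monotone bijection of finite posets need not be an isomorphism, so this needs an argument. Here I would use a counting trick. Since $\psi$ is a bijection and cardinalities of intervals in a product multiply,
\[
|[x_1,y_1]|\cdot|[x_2,y_2]|=|[x_1,z_1]|\cdot|[x_2,z_2]|\cdot|[z_1,y_1]|\cdot|[z_2,y_2]|,
\]
while injectivity of $\psi_1$ and of $\psi_2$ gives $|[x_1,z_1]|\cdot|[z_1,y_1]|\le|[x_1,y_1]|$ and $|[x_2,z_2]|\cdot|[z_2,y_2]|\le|[x_2,y_2]|$; multiplying the two inequalities and comparing with the identity forces both to be equalities, so $\psi_1$ is a bijection between finite posets of equal size. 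To upgrade this to an order-isomorphism, note that $\psi_1$ factors as the order-embedding $(a,b)\mapsto((a,z_2),(b,z_2))$, followed by $\psi$, followed by projection; by the surjectivity just obtained, $\psi$ restricts to a bijection from the image of that embedding onto the slice $\{(c,z_2)\mid c\in[x_1,y_1]\}$, and a restriction of an order-isomorphism to a subposet is an order-isomorphism onto its image, which here is order-isomorphic to $[x_1,y_1]$. Hence $\psi_1$ (and symmetrically $\psi_2$) is an order-isomorphism with the required boundary behaviour, so $(x_i,z_i,y_i)$ is a decomposition of $P_i$ and $D\in\Gmax(P_1)\times\Gmax(P_2)$. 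Trivial factors, where $x_i=z_i$ or $z_i=y_i$, pose no problem, since trivial triples are decompositions anyway.
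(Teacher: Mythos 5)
Your proof is correct. The paper does not actually supply an argument here --- it simply states that ``decompositions of the product poset correspond to pairs of decompositions of the initial posets'' and leaves the verification to the reader --- so your proposal fills in exactly the content that is omitted. The two steps that carry the weight are both sound: the squeeze $(a,z_2)\le\psi((a,z_2),(b,z_2))\le(b,z_2)$, obtained by comparing with the two axis points where $\psi$ is prescribed, correctly forces the image to lie in the slice $\{(c,z_2)\mid c\in[x_1,y_1]\}$ and gives the boundary conditions $\psi_1(a,z_1)=a$, $\psi_1(z_1,b)=b$; and your counting argument legitimately upgrades the injective monotone map $\psi_1$ to a bijection, after which writing $\psi_1$ as a composite of the order-embedding onto the slice, the restriction of the order-isomorphism $\psi$, and the projection shows it is a genuine order-isomorphism rather than merely a monotone bijection (a distinction you are right to flag, since monotone bijections of finite posets need not be isomorphisms). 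Together with the easy inclusion $\Gmax(P_1)\times\Gmax(P_2)\subseteq\Gmax(P_1\times P_2)$ from the preceding proposition, this completes the statement.
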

\begin{proof}
Again, it is left to the reader to show, that decompositions of the product poset correspond to pairs of decompositions of
the initial posets. 
\end{proof}

\begin{proposition}
The product of minimal decomposition sets is the minimal decomposition set of the product poset if and only if 
the factors posets are anti chains. 
\end{proposition}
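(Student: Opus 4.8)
For orientation, fix notation and unwind the definitions. Let $\BG_1,\BG_2$ be the minimal decomposition sets of $P_1,P_2$, and let $\Gmin$ denote the minimal decomposition set of $P_1\times P_2$; by definition each consists exactly of the trivial decompositions. Thus a triple $((x_1,x_2),(z_1,z_2),(y_1,y_2))$ lies in the product $\BG_1\times\BG_2$ precisely when $x_i\leq y_i$ and $z_i\in\{x_i,y_i\}$ for $i=1,2$, whereas it lies in $\Gmin$ precisely when $(z_1,z_2)\in\{(x_1,x_2),(y_1,y_2)\}$. Since $\BG_1\times\BG_2$ is itself a decomposition set (by the preceding proposition) it must contain every trivial decomposition of $P_1\times P_2$, so the inclusion $\Gmin\subseteq\BG_1\times\BG_2$ is automatic; the content of the statement is therefore entirely the reverse inclusion, and I would organise the proof around the two implications of the biconditional.

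For the implication that both factors being antichains forces equality, I would first note that a product of antichains is an antichain: $(x_1,x_2)\leq(y_1,y_2)$ forces $x_i\leq y_i$, hence $x_i=y_i$, hence $(x_1,x_2)=(y_1,y_2)$. Consequently every interval of $P_1\times P_2$ is a single point, so the only decompositions of the product are the identities $(a,a,a)$, and $\Gmin$ is exactly this set of identity triples. The same collapse happens in each factor, so $\BG_1\times\BG_2$ likewise reduces to the triples $((a_1,a_2),(a_1,a_2),(a_1,a_2))$. Both sides thus coincide with the set of identity triples, which gives the asserted equality.

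For the converse I must show that equality forces \emph{each} of $P_1$ and $P_2$ to be an antichain, which I would attempt by contraposition: assuming some factor, say $P_1$, contains a strict comparable pair $x_1<y_1$, I aim to produce a proper member of $\BG_1\times\BG_2$ and thereby conclude $\Gmin\subsetneq\BG_1\times\BG_2$. Drawing on a strict comparable pair $x_2<y_2$ in $P_2$, the candidate witness is the coordinatewise product $((x_1,x_2),(x_1,y_2),(y_1,y_2))$ of the two trivial decompositions $(x_1,x_1,y_1)$ and $(x_2,y_2,y_2)$: it belongs to $\BG_1\times\BG_2$, yet its middle entry $(x_1,y_2)$ equals neither $(x_1,x_2)$ (since $x_2<y_2$) nor $(y_1,y_2)$ (since $x_1<y_1$), so it is a proper, non-trivial decomposition of $P_1\times P_2$ and hence not in $\Gmin$.

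The step I expect to be the main obstacle is exactly this converse, and within it the case in which the second factor $P_2$ is itself an antichain. There $\BG_2$ collapses to the identity triples $(c,c,c)$, so no comparable pair $x_2<y_2$ is available, the mixed construction above degenerates, and every element of $\BG_1\times\BG_2$ is pushed back into the trivial form with $z_1\in\{x_1,y_1\}$ and $z_2=c$. Establishing that the antichain hypothesis is genuinely needed on each factor \emph{separately} therefore hinges on a careful treatment of this degenerate situation, and this is where I would concentrate the technical effort of the argument.
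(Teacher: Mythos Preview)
Your unwinding of the definitions and the explicit witness $((x_1,x_2),(x_1,y_2),(y_1,y_2))$ are precisely what the paper's terse proof has in mind: a product of trivial factor decompositions is trivial in $P_1\times P_2$ exactly when both middle coordinates agree with the respective first coordinates or both agree with the second, and a ``mixed'' choice is available precisely when each factor contains a strict relation.

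The obstacle you isolate in your final paragraph, however, is not a gap to be filled but a counterexample to the proposition as literally stated. Under the convention you are (rightly) using so that the preceding proposition holds---namely that degenerate triples $(c,c,c)$ count as trivial decompositions---suppose $P_2$ is an antichain while $P_1$ is not. Then $\BG_2=\{(c,c,c):c\in P_2\}$, every element of $\BG_1\times\BG_2$ has the form $((x_1,c),(z_1,c),(y_1,c))$ with $z_1\in\{x_1,y_1\}$ and is therefore trivial in the product, and conversely every trivial decomposition of $P_1\times P_2$ is of this form because comparability in the product forces equality in the $P_2$-coordinate. Hence $\BG_1\times\BG_2=\Gmin(P_1\times P_2)$ even though $P_1$ is not an antichain. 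The correct biconditional is that equality holds if and only if \emph{at least one} factor is an antichain; the paper's own proof, read literally (``both factor posets are no anti chains'' is equated with the existence of a nontrivial product of trivial decompositions), establishes exactly this corrected version. So the discrepancy lies in the wording of the proposition, not in your reasoning, and no further ``technical effort'' will close the case you flagged.
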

\begin{proof}
Pairs of trivial decompositions correspond to trivial decompositions of the product if and only if 
either the middle coordinates both equal their resp. first coordinates or they both equal their second coordinates. 
If and only if both factor posets are no anti chains this is not the case for any such pair. 
\end{proof}

\begin{theorem}
\label{theoprod}
For decomposition sets $\BG_1$ of $P_1$ and $\BG_2$ of $P_2$ and a chain $C$ in $P_1 \times P_2$ 
\begin{displaymath}
\langle C \rangle_{\BG_1 \times \BG_2} = \langle \pi_1 (C) \rangle_{\BG_1} \times \langle \pi_2 (C) \rangle_{\BG_2} 
\end{displaymath}
holds, where $\pi_1, \pi_2$ denote the projection maps of the product poset. 
\end{theorem}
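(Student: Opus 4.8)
The plan is to prove the two inclusions separately. The inclusion "$\subseteq$" is immediate: the set $D':=\langle\pi_1(C)\rangle_{\BG_1}\times\langle\pi_2(C)\rangle_{\BG_2}$ contains $C$, and it is closed under $\BG_1\times\BG_2$, since if $\big((x_1,x_2),(z_1,z_2),(y_1,y_2)\big)\in\BG_1\times\BG_2$ has both endpoints in $D'$ then $x_1,y_1\in\langle\pi_1(C)\rangle_{\BG_1}$ together with $(x_1,z_1,y_1)\in\BG_1$ forces $z_1\in\langle\pi_1(C)\rangle_{\BG_1}$, and symmetrically $z_2\in\langle\pi_2(C)\rangle_{\BG_2}$, so $(z_1,z_2)\in D'$. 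Hence $D'$ is one of the sets over which the defining intersection of $\langle C\rangle_{\BG_1\times\BG_2}$ ranges, so $\langle C\rangle_{\BG_1\times\BG_2}\subseteq D'$.

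For "$\supseteq$" write $D:=\langle C\rangle_{\BG_1\times\BG_2}$ and $C=\{c^{(0)}<\dots<c^{(m)}\}$ with $c^{(j)}=(a_j,b_j)$; since $C$ is a chain, $a_0\le\dots\le a_m$ in $P_1$ and $b_0\le\dots\le b_m$ in $P_2$. The first step is to put the whole ``grid'' $\pi_1(C)\times\pi_2(C)$ into $D$. Given $a=a_p$ and $b=b_q$, suppose first $p\le q$ (a symmetric argument covers $q<p$); then $c^{(p)}=(a,b_p)\le(a,b)\le(a_q,b)=c^{(q)}$, and the triple $\big((a,b_p),(a,b),(a_q,b)\big)$ lies in $\BG_1\times\BG_2$ because its coordinate triples $(a,a,a_q)$ and $(b_p,b,b)$ are trivial decompositions, hence belong to $\BG_1$ resp.\ $\BG_2$. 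As both endpoints lie in $C\subseteq D$ and $D$ is closed, $(a,b)\in D$.

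Next I would close up one factor at a time. Fix $b\in\pi_2(C)$ and put $S_b:=\{a\in P_1\mid(a,b)\in D\}$; by the grid step $\pi_1(C)\subseteq S_b$, and $S_b$ is $\BG_1$-closed, for if $(x_1,z_1,y_1)\in\BG_1$ with $x_1,y_1\in S_b$ then $\big((x_1,b),(z_1,b),(y_1,b)\big)\in\BG_1\times\BG_2$ (its second coordinate triple $(b,b,b)$ being trivial), whence $(z_1,b)\in D$, i.e.\ $z_1\in S_b$. Thus $\langle\pi_1(C)\rangle_{\BG_1}\subseteq S_b$, that is, $\langle\pi_1(C)\rangle_{\BG_1}\times\pi_2(C)\subseteq D$. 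Running the same argument with the roles of the two factors exchanged: for fixed $a\in\langle\pi_1(C)\rangle_{\BG_1}$ the set $T_a:=\{b\in P_2\mid(a,b)\in D\}$ contains $\pi_2(C)$ by the previous step and is $\BG_2$-closed (now because $(a,a,a)$ is trivial), so $\langle\pi_2(C)\rangle_{\BG_2}\subseteq T_a$. Letting $a$ range over $\langle\pi_1(C)\rangle_{\BG_1}$ gives $\langle\pi_1(C)\rangle_{\BG_1}\times\langle\pi_2(C)\rangle_{\BG_2}\subseteq D$, which is the desired inclusion.

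The only non-formal point is the grid step, and it is there that the hypothesis that $C$ is a \emph{chain} is genuinely needed: the ``staircase'' shape of a chain in a product poset is exactly what lets one reach each grid point $(a_p,b_q)$ from two members of $C$ via a product of two trivial decompositions, and the statement does fail for general subsets $C$. Once the grid is secured, the two closing-up steps are routine, the point being the simple observation that a trivial decomposition in one factor makes the closure operator of the other factor act fibrewise inside $D$.
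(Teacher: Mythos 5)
Your proof is correct. The second inclusion follows the paper's own route almost verbatim: you first place the whole grid $\pi_1(C)\times\pi_2(C)$ inside $\langle C\rangle_{\BG_1\times\BG_2}$ using products of trivial decompositions along the staircase of the chain (the paper's triples $((c_i,d_i),(c_i,d_j),(c_j,d_j))$), and then close up one factor at a time, exploiting that a trivial decomposition in one coordinate lets the closure operator of the other factor act fibrewise. Your explicit sets $S_b$ and $T_a$ just make precise what the paper asserts in the sentence ``Thus for any $j$, $(a,d_j)$ is in $\langle C\rangle_{\BG_1\times\BG_2}$ since $a\in\langle\pi_1(C)\rangle_{\BG_1}$.'' Where you genuinely diverge is the first inclusion: the paper proves $\subseteq$ by induction on the size of the decomposition sets, adjoining one decomposition $(x_1,z_1,y_1)$ to $\BG_1$ at a time and tracking how the closure can grow, whereas you simply observe that $\langle\pi_1(C)\rangle_{\BG_1}\times\langle\pi_2(C)\rangle_{\BG_2}$ is itself a $\BG_1\times\BG_2$-closed superset of $C$ and is therefore one of the sets over which the defining intersection of $\langle\cdot\rangle_{\BG_1\times\BG_2}$ ranges. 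Your argument is shorter, avoids the induction bookkeeping entirely, and uses nothing beyond the definition of the closure operator; the paper's induction buys nothing extra here, so your version is, if anything, the cleaner of the two. Your closing remark that chain-ness of $C$ is exactly what makes the grid step work is also on point and matches the (implicit) use of the chain ordering in the paper.
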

\begin{proof}
Let us show the first inclusion by induction on the size of $\BG_1$ and $\BG_2$. 
For the start we consider $\Gmin(P_1)\times \Gmin(P_2)$. 
Decompositions, corresponding to pairs of trivial decompositions, indeed generate new elements to $C$, 
but they change neither $\pi_1(C)$ nor $\pi_2(C)$. 
Thus even $\langle C \rangle_{\Gmin(P_1) \times \Gmin(P_2)}=\pi_1(C)\times \pi_2(C)$ is true.  
For the induction itself set $\BG'_1:= \BG_1 \cup \{ (x_1,z_1,y_1)\}$. 
If $\langle C \rangle_{\BG'_1 \times \BG_2}= \langle C \rangle_{\BG_1 \times \BG_2}$, by the 
induction hypothesis, this equals $\langle \pi_1 (C) \rangle_{\BG_1} \times \langle \pi_2 (C) \rangle_{\BG_2}$, 
which is a subset of $\langle \pi_1 (C) \rangle_{\BG'_1} \times \langle \pi_2 (C) \rangle_{\BG_2}$. 
So assume $(z_1,z_2) \in \langle C \rangle_{\BG'_1 \times \BG_2} \backslash \langle C \rangle_{\BG_1 \times \BG_2}$ 
for some $z_2\in \langle \pi_2 (C) \rangle_{\BG_2}$. 
Then there has to be a decomposition 
$((x_1,x_2),(z_1,z_2),(y_1,y_2))\in \BG'_1 \times \BG_2 \backslash \BG_1 \times \BG_2$ with 
$(x_1,x_2),(y_1,y_2)\in \langle C \rangle_{\BG_1 \times \BG_2}=
 \langle \pi_1 (C) \rangle_{\BG_1} \times \langle \pi_2 (C) \rangle_{\BG_2}$. 
Thus in particular $x_1,y_1$ are contained in $\langle \pi_1 (C) \rangle_{\BG_1}$. 
This gives $z_1\in \langle \pi_1 (C) \rangle_{\BG'_1}$ and so $(z_1,z_2)\in 
 \langle \pi_1 (C) \rangle_{\BG'_1} \times \langle \pi_2 (C) \rangle_{\BG_2}$.

For the second inclusion, let $C=\{(c_i,d_i)|c_i \in P_1,d_i \in P_2, 1\leq i \leq n\}$ and 
$(a,b)\in \langle \pi_1 (C) \rangle_{\BG_1} \times \langle \pi_2 (C) \rangle_{\BG_2}$. 
Then $(c_i,d_j)\in \langle C \rangle_{\BG_1 \times \BG_2}$ for any $1\leq i,j \leq n$, because w.l.o.g. 
for $i \leq j$ the decomposition $((c_i,d_i),(c_i,d_j),(c_j,d_j))\in \BG_1 \times \BG_2$. 
Thus for any $1\leq j \leq n$, $(a,d_j)$ is in $\langle C \rangle_{\BG_1 \times \BG_2}$ since 
$a\in \langle \pi_1 (C) \rangle_{\BG_1}$. 
By the same argument $(a,b)$ is in $\langle C \rangle_{\BG_1 \times \BG_2}$, because 
$b\in \langle \pi_2 (C) \rangle_{\BG_2}$. 
\end{proof}

\begin{corollary}
Let $\BG_1,\BG_2$ be decomposition sets of $P_1$ resp. $P_2$, then 
%For decomposition sets $\BG_1$ of $P_1$ and $\BG_2$ of $P_2$, the map 
\begin{displaymath}
\mathcal{D}(P_1 \times P_2,\BG_1 \times \BG_2)\cong 
\mathcal{D}(P_1,\BG_1)\times \mathcal{D}(P_2,\BG_2)
\end{displaymath}
via the canonical isomorphisms of taking products and projecting to coordinates. 
\end{corollary}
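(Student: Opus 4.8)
The plan is to make explicit the bijection hinted at in the statement: send a pair $(A_1,A_2)$ with $A_i\in\mathcal{D}(P_i,\BG_i)$ to the product set $A_1\times A_2\subseteq P_1\times P_2$, with inverse $A\mapsto(\pi_1(A),\pi_2(A))$. First I would record two elementary observations that make products of faces well behaved. The image of a chain under a projection is again a chain, so $\pi_i$ maps faces to faces in the right way. And every face $A\in\mathcal{D}(P_i,\BG_i)$ is non-empty (it contains the chain that generates it), so that $\pi_i(A_1\times A_2)=A_i$ whenever both factors are non-empty, and $A_1\times A_2\subseteq A_1'\times A_2'$ holds if and only if $A_1\subseteq A_1'$ and $A_2\subseteq A_2'$. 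Together these give injectivity of the map and the fact that it is order-preserving in both directions; it then remains only to check that $(A_1,A_2)\mapsto A_1\times A_2$ is a well-defined bijection between the two posets.

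For surjectivity, take $A\in\mathcal{D}(P_1\times P_2,\BG_1\times\BG_2)$, say $A=\langle C\rangle_{\BG_1\times\BG_2}$ for a chain $C$ in $P_1\times P_2$. Theorem \ref{theoprod} gives $A=\langle\pi_1(C)\rangle_{\BG_1}\times\langle\pi_2(C)\rangle_{\BG_2}$, and since each $\pi_i(C)$ is a chain in $P_i$, both factors lie in $\mathcal{D}(P_i,\BG_i)$; thus $A$ is the image of $(\langle\pi_1(C)\rangle_{\BG_1},\langle\pi_2(C)\rangle_{\BG_2})$. For well-definedness I need the converse: given $A_i=\langle C_i\rangle_{\BG_i}$ with $C_i$ a chain in $P_i$, I must produce a \emph{single} chain $C$ in $P_1\times P_2$ with $\langle C\rangle_{\BG_1\times\BG_2}=A_1\times A_2$. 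Writing $C_1=\{c_1<\dots<c_m\}$ and $C_2=\{d_1<\dots<d_n\}$, I would take the ``staircase'' chain
\[
C=\{(c_1,d_1)<(c_2,d_1)<\dots<(c_m,d_1)<(c_m,d_2)<\dots<(c_m,d_n)\},
\]
which is indeed a chain in $P_1\times P_2$ and satisfies $\pi_1(C)=C_1$ and $\pi_2(C)=C_2$ as sets. Applying Theorem \ref{theoprod} once more yields $\langle C\rangle_{\BG_1\times\BG_2}=\langle C_1\rangle_{\BG_1}\times\langle C_2\rangle_{\BG_2}=A_1\times A_2$, as required.

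Putting the two steps together, $(A_1,A_2)\mapsto A_1\times A_2$ is an order-preserving bijection from $\mathcal{D}(P_1,\BG_1)\times\mathcal{D}(P_2,\BG_2)$ onto $\mathcal{D}(P_1\times P_2,\BG_1\times\BG_2)$ whose inverse $A\mapsto(\pi_1(A),\pi_2(A))$ is also order-preserving, hence a poset isomorphism; this is exactly the isomorphism induced by taking products and projecting to coordinates. The only step that needs an idea rather than pure bookkeeping is the staircase construction realizing a prescribed pair of factor-closures by one chain in the product — everything else is Theorem \ref{theoprod} together with non-emptiness of faces — so I expect that to be the main (though quite mild) obstacle.
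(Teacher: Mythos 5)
Your proposal is correct and follows essentially the same route as the paper: both arguments reduce everything to Theorem \ref{theoprod}, using the same ``staircase'' chain $\{(c_1,d_1),\ldots,(c_m,d_1),(c_m,d_2),\ldots,(c_m,d_n)\}$ to show that taking products is well defined and the projection identities to handle the inverse map. You simply spell out the order-preservation, injectivity, and non-emptiness bookkeeping that the paper leaves implicit.
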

\begin{proof}
Since the maps preserve order and do invert each other it is enough to check, that they are well defined. 
For chains $\{ c_1, \ldots , c_n\} \subseteq P_1$ and $\{ d_1, \ldots, d_m\}\subseteq P_2$, 
Theorem \ref{theoprod} applied to the chain $\{ (c_1,d_1),\ldots ,(c_n,d_1),\ldots,(c_n,d_m)\}$
immediately shows that taking products is well defined. 
By Theorem \ref{theoprod} we obtain 
$\pi_1(\langle C \rangle_{\BG_1 \times \BG_2})=\langle \pi_1(C) \rangle_{\BG_1}$ and 
$\pi_2(\langle C \rangle_{\BG_1 \times \BG_2})=\langle \pi_2(C) \rangle_{\BG_2}$
So the projection maps are well defined, too. 
For all decomposition complexes, the empty set has to be ignored, 
because for those special instances, taking products is not injective.
\end{proof}

\begin{example}
Let $P$ be the power set lattice of rank $3$. 
As in Example \ref{ex:second}, the identity is a realization, even for the maximal decomposition set. 
Then the realization of the decomposition complex resp. the minimal decomposition set is a triangulated 
unit cube in $\R^3$ into six $3$-simplices, all sharing the edge between $(0,0,0)$ and $(1,1,1)$. 
On the left of Figure \ref{fig:cubes} the realization resp. the minimal decomposition set is shown. 
On the right, we can see a realization resp. the maximal decomposition set. 
It is the unsubdivided unit cube. 

Since the power set lattice of rank $3$ is canonically isomorphic the product of the power set lattices 
$2^{\{1,2\}}$ and $2^{\{3\}}$, the product of the minimal decomposition sets of the factors gives a decomposition set, 
which is neither the minimal nor the maximal one. 
%It consists of all triples except for $(\emptyset,1,12)$, $(\emptyset,2,12)$,$(3,13,123)$ and $(3,23,123)$. 
The middle of Figure \ref{fig:cubes} shows the realization of the decomposition complex resp. this 
decomposition set. 

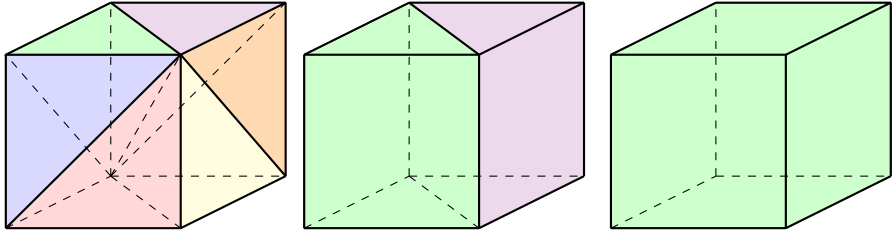
\begin{figure}[h]
{\small
\centering
\subfloat
{
\begin{tikzpicture}[scale=2.3]
 \coordinate (A1) at (0, 0);
    \coordinate (A2) at (0, 1);
    \coordinate (A3) at (1, 1);
    \coordinate (A4) at (1, 0);
    \coordinate (B1) at (0.6, 0.3);
    \coordinate (B2) at (0.6, 1.3);
    \coordinate (B3) at (1.6, 1.3);
    \coordinate (B4) at (1.6, 0.3);
   \filldraw[red!15] (A1)--(A4)--(A3)--(A1);
\filldraw[blue!15] (A1)--(A2)--(A3)--(A1);
\filldraw[yellow!15] (A4)--(B4)--(A3)--(A4);
\filldraw[fill=green!20]  (B2)--(A3)--(A2)--(B2);
\filldraw[violet!15] (A3)--(B3)--(B2)--(A3);
\filldraw[orange!30] (A3)--(B3)--(B4)--(A3);  
\draw[thick] (A1) -- (A2);
    \draw[thick] (A2) -- (A3);
    \draw[thick] (A3) -- (A4);
    \draw[thick](A4) -- (A1);
\draw[dashed] (B1) -- (A4);
\draw[dashed] (B1) -- (A2);
\draw[dashed] (B1) -- (A3);
\draw[dashed] (B1) -- (B3);
\draw[thick] (A3) -- (B2);
\draw[thick] (A3) -- (A1);
\draw[thick] (A3) -- (B4);
    \draw[dashed] (A1) -- (B1);
    \draw[dashed] (B1) -- (B2);
    \draw[thick](A2) -- (B2);
    \draw[thick](B2) -- (B3);
    \draw[thick](A3) -- (B3);
    \draw[thick](A4) -- (B4);
    \draw[thick](B4) -- (B3);
    \draw[dashed] (B1) -- (B4);
\end{tikzpicture}
}
{
\begin{tikzpicture}[scale=2.3]
 \coordinate (A1) at (0, 0);
    \coordinate (A2) at (0, 1);
    \coordinate (A3) at (1, 1);
    \coordinate (A4) at (1, 0);
    \coordinate (B1) at (0.6, 0.3);
    \coordinate (B2) at (0.6, 1.3);
    \coordinate (B3) at (1.6, 1.3);
    \coordinate (B4) at (1.6, 0.3);
\filldraw[fill=green!20]  (B2)--(A3)--(A4)--(A1)--(A2)--(B2);
\filldraw[violet!15] (A3)--(A4)--(B4)--(B3)--(B2)--(A3);
\draw[thick] (A1) -- (A2);
    \draw[thick] (A2) -- (A3);
    \draw[thick] (A3) -- (A4);
    \draw[thick](A4) -- (A1);
\draw[dashed] (B1) -- (A4);
\draw[thick] (A3) -- (B2);
    \draw[dashed] (A1) -- (B1);
    \draw[dashed] (B1) -- (B2);
    \draw[thick](A2) -- (B2);
    \draw[thick](B2) -- (B3);
    \draw[thick](A3) -- (B3);
    \draw[thick](A4) -- (B4);
    \draw[thick](B4) -- (B3);
    \draw[dashed] (B1) -- (B4);
\end{tikzpicture}
}
{
\begin{tikzpicture}[scale=2.3]
 \coordinate (A1) at (0, 0);
    \coordinate (A2) at (0, 1);
    \coordinate (A3) at (1, 1);
    \coordinate (A4) at (1, 0);
    \coordinate (B1) at (0.6, 0.3);
    \coordinate (B2) at (0.6, 1.3);
    \coordinate (B3) at (1.6, 1.3);
    \coordinate (B4) at (1.6, 0.3);
   \filldraw[fill=green!20]  (B2)--(B3)--(B4)--(A4)--(A1)--(A2)--(B2);
\draw[thick] (A1) -- (A2);
    \draw[thick] (A2) -- (A3);
    \draw[thick] (A3) -- (A4);
    \draw[thick](A4) -- (A1);
    \draw[dashed] (A1) -- (B1);
    \draw[dashed] (B1) -- (B2);
    \draw[thick](A2) -- (B2);
    \draw[thick](B2) -- (B3);
    \draw[thick](A3) -- (B3);
    \draw[thick](A4) -- (B4);
    \draw[thick](B4) -- (B3);
    \draw[dashed] (B1) -- (B4);
\end{tikzpicture}
}
}
\caption{Realizations of the power set lattice of rank $3$ resp. different decomposition sets}
\label{fig:cubes}
\end{figure}
\end{example}

\begin{theorem}
Let $\BG_1$ be a decomposition set of $P_1$ and let $\BG_2$ be a decomposition set of $P_2$. 
Then
\begin{displaymath}
D ( P_1 \coprod P_2, \BG_1 \cup \BG_2) ~=~  D(P_1,\BG_1) ~\coprod ~  D(P_2,\BG_2). 
\end{displaymath}
\end{theorem}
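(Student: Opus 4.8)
The plan is to exploit the fact that a disjoint union of posets has no comparabilities between the two components, so that both the chains and the closure operator decompose accordingly. First I would record that $\BG_1\cup\BG_2$ really is a decomposition set of $P_1\coprod P_2$: since no element of $P_1$ is comparable to any element of $P_2$, an interval $[x,y]$ with $x,y\in P_i$ computed in $P_1\coprod P_2$ agrees with the one computed in $P_i$, so every decomposition of $P_i$ is a decomposition of $P_1\coprod P_2$; conversely any decomposition $(x,z,y)$ of $P_1\coprod P_2$ has its three entries comparable, hence in a single component, and likewise every trivial decomposition of $P_1\coprod P_2$ is a trivial decomposition of $P_1$ or of $P_2$. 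Thus $\BG_1\cup\BG_2$ is a decomposition set containing all trivial decompositions.

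The key step is to show that the closure operator does not mix the two components. A non-empty chain $C$ in $P_1\coprod P_2$ is totally ordered, hence contained entirely in $P_1$ or entirely in $P_2$; say $C\subseteq P_1$. I claim $\langle C\rangle_{\BG_1\cup\BG_2}=\langle C\rangle_{\BG_1}$, and in particular it is a non-empty subset of $P_1$. One inclusion is monotonicity of $\langle\cdot\rangle$ in the decomposition set, which is immediate from the definition (enlarging $\BG$ shrinks the family of sets being intersected), using $\BG_1\subseteq\BG_1\cup\BG_2$. For the reverse inclusion it suffices to check that $\langle C\rangle_{\BG_1}$ is closed with respect to $\BG_1\cup\BG_2$: if $(x,z,y)\in\BG_1\cup\BG_2$ with $x,y\in\langle C\rangle_{\BG_1}\subseteq P_1$, then $x\in P_1$ rules out $(x,z,y)\in\BG_2$ (decompositions of $\BG_2$ have their endpoints in $P_2$), so $(x,z,y)\in\BG_1$ and hence $z\in\langle C\rangle_{\BG_1}$, as that set is $\BG_1$-closed. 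Therefore $\langle C\rangle_{\BG_1}$ is one of the sets occurring in the intersection defining $\langle C\rangle_{\BG_1\cup\BG_2}$, giving $\langle C\rangle_{\BG_1\cup\BG_2}\subseteq\langle C\rangle_{\BG_1}$.

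Combining the two observations, $\mathcal{D}(P_1\coprod P_2,\BG_1\cup\BG_2)=\{\langle C\rangle_{\BG_1\cup\BG_2}\mid C\text{ a non-empty chain in }P_1\coprod P_2\}$ equals $\{\langle C\rangle_{\BG_1}\mid C\text{ a non-empty chain in }P_1\}\cup\{\langle C\rangle_{\BG_2}\mid C\text{ a non-empty chain in }P_2\}=\mathcal{D}(P_1,\BG_1)\cup\mathcal{D}(P_2,\BG_2)$ as families of sets. To identify this union, ordered by inclusion, with the coproduct $\mathcal{D}(P_1,\BG_1)\coprod\mathcal{D}(P_2,\BG_2)$, observe that every member of $\mathcal{D}(P_i,\BG_i)$ is a non-empty subset of $P_i$; since $P_1$ and $P_2$ are disjoint, a member of $\mathcal{D}(P_1,\BG_1)$ and a member of $\mathcal{D}(P_2,\BG_2)$ are disjoint non-empty sets, hence incomparable under inclusion. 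So no order relations run between the two parts, which is exactly what the coproduct of posets means.

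I do not expect a genuinely hard step here: the entire content is that both chains and the closure operator respect the partition of $P_1\coprod P_2$ into components. The only point needing a little care is the monotonicity-plus-closedness argument identifying $\langle C\rangle_{\BG_1\cup\BG_2}$ with $\langle C\rangle_{\BG_i}$, and checking that the empty set does not appear on either side — which it does not, since all chains considered are non-empty and closures of non-empty sets are non-empty.
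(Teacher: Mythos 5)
Your proof is correct and follows exactly the route of the paper's (much terser) argument: decompositions of the coproduct live in a single summand because their three entries are comparable, chains live in a single summand, and hence the closure operator and the resulting complex split along the components. The paper leaves all of this implicit in one sentence; your elaboration, including the monotonicity-plus-closedness identification of $\langle C\rangle_{\BG_1\cup\BG_2}$ with $\langle C\rangle_{\BG_i}$, fills in the details correctly.
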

\begin{proof}
Since decompositions of the coproduct correspond to decompositions of the summands and chains 
are either contained in $P_1$ or in $P_2$, the statement is true, like in the case of order complexes. 
\end{proof}

\begin{remark}
\label{prodreal}
It is left to the reader to check, that the properties of symmetry and downwards closedness are preserved under 
taking products. The same is true for realizations. 
Point-wise products of $\BG-$ and $\BGG$-realizations clearly are $\BG \times \BGG-$realizations. 
\end{remark}

\section{Nested set complexes}
\label{ch5}
Let $\mathcal{P}$ be a poset with unique minimal element $\hat{0}$. 
For a subset $\BG$ of $P\backslash \{ \hat{0}\}$ we define $F_\BG (y)$ 
to be the set of maximal elements of $\BG_{\leq y}$. 

\begin{definition}
Then $\BG$ is called a \emph{building set} if for all $y \neq \hat{0}$ there is an isomorphism 
\begin{displaymath}
\psi_y : \Pi_{z_i \in F_\BG (y)}[\hat{0},z_i] \rightarrow [\hat{0},y],
\end{displaymath}
which is induced by the inclusions of intervals \emph{i.e.} $\psi_y( \hat{0},\ldots,\hat{0},x_i,\hat{0},\ldots,\hat{0} )
=x_i$. 

A set $S \subseteq \BG$ is called \emph{nested} if for any subset $A\subseteq S$ of size at least 2, 
the join $\bigvee A$ exists but is not in $\BG$ any more. 
The nested sets, ordered by inclusion form an abstract simplicial complex, called the \emph{nested set complex} 
of $P$ resp. $\BG$. 
\end{definition}

\begin{theorem}
For a poset $P$ with unique minimal element $\hat{0}$ and a building set $\BG$ of $P$, the map  
\begin{align*}
\phi: \mathcal{N}(P,\BG) &\rightarrow 
\mathcal{D}(P,\BG_{min}\cup \{ (\hat{0},z,y)\in \BG_{max}~ | ~ y \notin \BG \}) \\
S &\mapsto \{ \bigvee A | A\subseteq S\} 
\end{align*}
is an embedding with image $\{ \Cerz |~ C \text{ chain with } \hat{0}\in C\}.$
\end{theorem}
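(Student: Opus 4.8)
The plan is to show that $\phi$ is well defined with the stated image, injective, and order-preserving with order-preserving inverse. Write $\BG' := \BG_{min}\cup\{(\hat0,z,y)\in\BG_{max} \mid y\notin\BG\}$ for the decomposition set on the target side, and note first that $\BG'$ is symmetric and downwards closed (the complementary decomposition of $(\hat0,z,y)$ is again of the form $(\hat0,z',y)$ with the same top $y\notin\BG$, and any decomposition below $(\hat0,z,y)$ sits inside $[\hat0,y]$, hence is also a decomposition with bottom $\hat0$ and a top that is $\leq y$; one must check the top is still not in $\BG$, which follows because the building-set isomorphism $\psi_y$ restricts compatibly). This lets us invoke Proposition \ref{valids} and Corollary \ref{cor:conedec} freely on the target.

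First I would handle \textbf{well-definedness}. Given a nested set $S$, I claim $T:=\{\bigvee A\mid A\subseteq S\}$ (with $\bigvee\emptyset=\hat0$) is closed under $\BG'$, i.e.\ $T=\langle T\rangle_{\BG'}$, and that $T=\langle C\rangle_{\BG'}$ for a suitable chain $C$ containing $\hat0$. The key structural input is the building-set isomorphism: for the top element $y=\bigvee S$, $\psi_y$ identifies $[\hat0,y]$ with $\prod_{z_i\in F_{\BG}(y)}[\hat0,z_i]$, and under this identification $S$ decomposes as a disjoint union $S=\coprod_i S_i$ with $S_i\subseteq[\hat0,z_i]$ nested, and joins are computed coordinatewise. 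By induction on $|S|$ (or on the rank of $y$) one reduces to the case where $F_{\BG}(y)=\{y\}$, i.e.\ $y\in\BG$; then for any proper $A\subsetneq S$, $\bigvee A\in\BG$ as well, and the joins of subsets of $S$ form exactly a chain-closure: order $S=\{s_1,\dots,s_k\}$ so that partial joins $\hat0 < s_1 < s_1\vee s_2<\dots<\bigvee S=y$ form a maximal chain $C$ in $T$; then $\langle C\rangle_{\BG'}$ picks up exactly the intermediate joins forced by the decompositions $(\hat0, -, -)$ with top not in $\BG$, which are precisely the elements $\bigvee A$. This is the step I expect to be the \textbf{main obstacle}: matching "all subset-joins of a nested set" with "closure of one maximal chain under $\BG'$" requires carefully tracking which joins are in $\BG$ and which are not, and using the building-set product decomposition to run the induction — one has to verify both that every $\bigvee A$ is generated (it lies in some interval forced by a decomposition with top $\notin\BG$, using nestedness to guarantee the relevant joins exist and are the unique "new" elements à la Lemma \ref{lem:min}) and that nothing extra is generated (any decomposition $(\hat0,z',y')$ in $\BG'$ with endpoints among subset-joins forces $z'$ to again be a subset-join, by coordinatewise computation).

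Then \textbf{injectivity}: from $T=\phi(S)$ one recovers $S$ as the set of elements of $T$ that are join-irreducible within $T\cap\BG$ in the appropriate sense — more precisely, $S$ is the set of $\BG$-elements of $T$ that are maximal in no proper sub-join; equivalently, recursively, $\bigvee S$ is the maximum of $T$, its $F_{\BG}$-components are read off, and one recurses into each interval $[\hat0,z_i]$. Since the building-set isomorphism makes this decomposition canonical, $S$ is determined by $T$. For \textbf{order-preservation}: if $S\subseteq S'$ are nested, then every subset-join of $S$ is a subset-join of $S'$, so $\phi(S)\subseteq\phi(S')$; conversely if $\phi(S)\subseteq\phi(S')$ then applying the recovery map gives $S\subseteq S'$, so $\phi$ is an order-embedding. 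Finally, for the \textbf{image}: by construction every $\phi(S)$ equals $\langle C\rangle_{\BG'}$ for a chain $C\ni\hat0$, giving "$\subseteq$"; conversely, given a chain $C$ with $\hat0\in C$, write $A:=\langle C\rangle_{\BG'}$ and use Proposition \ref{valids} to see $A=\langle C'\rangle$ for its (essentially unique) maximal chain through $\hat0$; then reading off, at the top element and recursively through the building-set product, the "new" elements not of chain type produces a nested set $S$ with $\phi(S)=A$ — here Lemma \ref{lem:min} identifies each minimal decomposition's middle element as a join of two comparable-to-$C$ elements, which assembles into the required nested set. Collecting these four points gives that $\phi$ is an embedding of posets onto $\{\langle C\rangle\mid C\text{ a chain with }\hat0\in C\}$, as claimed. $\qed$
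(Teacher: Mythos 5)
Your argument hinges on the opening claim that $\BG' := \BG_{min}\cup\{(\hat0,z,y)\in\BG_{max}\mid y\notin\BG\}$ is symmetric \emph{and downwards closed}, since that is what licenses every later appeal to Proposition \ref{valids} and Lemma \ref{lem:min}. Symmetry is fine, but downwards closure fails: a decomposition lesser than $(\hat0,z,y)$ decomposes a subinterval $[u,v]\subseteq[\hat0,y]$ whose bottom $u$ need not be $\hat0$, and your parenthetical justification overlooks exactly this. Concretely, take $P=B_3$ with building set $\BG$ the set of atoms; then $(\hat0,\{1\},\{1,2,3\})\in\BG'$ has the proper lesser decomposition $(\{2\},\{1,2\},\{1,2,3\})$, which is not of the form $(\hat0,\cdot,\cdot)$ and hence lies outside $\BG'$. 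Without downwards closure you may not invoke Proposition \ref{valids}(iii) to pass from ``generated by some chain'' to ``generated by any maximal chain'', nor Lemma \ref{lem:min} to identify middle elements of minimal decompositions, so the scaffolding of your well-definedness and image arguments does not get off the ground (Corollary \ref{cor:conedec} is moreover about realizations and is irrelevant here). A second, independent defect is the reduction inside your well-definedness step: you reduce to the case $y=\bigvee S\in\BG$ and assert that $\bigvee A\in\BG$ for every proper $A\subsetneq S$, but the definition of a nested set says the opposite --- for $|A|\geq 2$ the join $\bigvee A$ exists and is \emph{not} in $\BG$ --- so this case occurs only for $|S|\leq 1$ and the induction does not close.

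For comparison, the paper sidesteps the general machinery of Chapter \ref{ch2b} entirely and works directly with the closure operator: it puts linear extensions $a_1,\dots,a_n$ of $(S,\leq)$ in bijection with the maximal chains $\{\bigvee_{j\leq i}a_j\mid 0\leq i\leq n\}$ of $\phi(S)$, and shows that an adjacent transposition $a_i\leftrightarrow a_{i+1}$ changes the chain in a single element, namely the middle of the decomposition $(\hat0,\bigvee B\setminus\{a_i\},\bigvee B)$ with $B=F_\BG(\bigvee_{j\leq i+1}a_j)$; the building-set isomorphism supplies the interval product and nestedness guarantees $\bigvee B\notin\BG$, so this decomposition lies in $\BG'$ and the two chains have the same closure. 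Since adjacent transpositions connect all linear extensions, every maximal chain of $\phi(S)$ generates $\phi(S)$; injectivity then follows from $S=\phi(S)\cap\BG$, and the image is identified by setting $S:=\bigcup_{c\in C}F_\BG(c)$ for a chain $C\ni\hat0$. If you want to keep your plan, you must either prove the specific generation and minimality statements for $\BG'$ by hand or switch to an argument of this direct type.
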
 
\begin{proof}
First, note that all the joins in $ \{ \bigvee A | A\subseteq S\} $ are well defined, because $S$ is nested. 
For any such $S$, 
there is a one to one correspondence between linear extensions of the subposet $(S,\subseteq)$ 
and maximal chains 
in $ \{ \bigvee A | A\subseteq S\} $. 
To a linear ordering $a_1,\ldots ,a_n$, it assigns the chain 
$\{ \bigvee_{j\leq i}a_j | 0 \leq i \leq n \}$. 

Assume $a_1,\ldots, a_i,a_{i+1},\ldots ,a_n$ and $a_1,\ldots, a_{i+1},a_i,\ldots ,a_n$ are both linear extensions 
and $C_1,C_2$ are their assigned chains. 
In particular, $a_i$ and $a_{i+1}$ are incomparable and both are contained in $B:= F_\BG (\bigvee_{j\leq i+1}a_j) $. 
Obviously any two linear extensions can be transformed into each other by a series of such adjacent exchanges. 

The following calculations just use that all subsets of $B\subset S$ are nested sets.  
 \begin{align*}
[\hat{0},\bigvee B] &\cong \Pi_{a_j \in B} [\hat{0},a_j] \cong \Pi_{a_j \in B, j\neq i} [\hat{0},a_j] \times [\hat{0},a_i] \\
&\cong  [\hat{0},\bigvee B\backslash \{ a_i\}] \times [\hat{0},a_i]  \\
&\cong [\hat{0},\bigvee B\backslash \{ a_i\}] \times [\bigvee B\backslash \{ a_i\},\bigvee B]
\end{align*}
So $(\hat{0},\bigvee B\backslash \{ a_i\},\bigvee B)$ is a proper decomposition of the decomposition set 
$\BG_{min}\cup \{ (\hat{0},z,y)\in \BG_{max}~ | ~ y \notin \BG \}$, because by definition joins of nested sets 
must not belong to the building set. 
Since $\hat{0}$ as well as $\bigvee B=\bigvee_{j\leq i+1}a_j$ 
are both contained in $\{ \bigvee_{j\leq i}a_j | 0 \leq i \leq n \}$, 
we obtain that the single element of $C_2 \backslash C_1$, 
namely $\bigvee B \backslash \{ a_i \}$, belongs to $\langle C_1 \rangle$. 
Thus $\langle C_1 \rangle \subseteq \langle C_2 \rangle$ and 
therefore $\{ \bigvee A | A\subseteq S\}$ is generated by any of its maximal chains.
Thus the map is well defined, since order preserving is trivial and injectivity is due to the fact, 
that $x \in S$ if and only if $x \in \phi(S)$. 

For the statement about the image, note, that 
by definition the empty join is $\hat{0}$. So clearly all images contain $\hat{0}$. 
Since the operator $\langle \cdot \rangle$ cannot generate elements, which are lesser than the minimal element 
of the generating chain, this chain has to contain $\hat{0}$. 

On the other hand, consider a chain $C$ containing $\hat{0}$. 
Setting $S:= \bigcup_{c \in C} F_\BG(c)$, it is easy to show, that 
any maximal extension of $C $ in $\{ \bigvee A | A\subseteq S\}$ already generates the latter.  
\end{proof}

\section{Bergman Fans}
\label{ch6}
%Let $M$ be a  matroid with ground set $E(M)=[n]$. 
%We identify the matroid with its set of bases. 
%We start with a construction introduced by Feichtner/Yuzvinsky(\cite{FY04}).
Let $M$ be a finite, simple matroid, $\mathcal{L}_M$ its lattice of flats and 
$\{ a_1,\ldots, a_n \}$ the ground set of $M$, which are also the atoms of $\mathcal{L}_M$. 
\begin{proposition}
\label{prop:atom}
The (canonical) map 
\begin{displaymath}
\phi: \mathcal{L}_M \rightarrow B_{n},~ x \mapsto \{ i \in [n] | a_i \leq x \} 
\end{displaymath}
is a $\BG_{max}$-realization. 
\end{proposition}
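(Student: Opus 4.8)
The plan is to verify directly that the map $\phi$ satisfies the defining identities of a $\BG_{max}$-realization, namely that $\phi$ is an order-embedding $\mathcal{L}_M \to B_n$ and that for every pair of complementary decompositions $(x,z,y),(x,z',y)$ in $\BG_{max}$ one has $\phi(x)=\phi(z)\cap\phi(z')$ and $\phi(y)=\phi(z)\cup\phi(z')$. That $\phi$ is an order-embedding is standard matroid theory: a flat is determined by the atoms below it (the matroid is simple, so atoms are the single-element flats), $x\le y$ clearly forces $\phi(x)\subseteq\phi(y)$, and conversely if $\phi(x)\subseteq\phi(y)$ then every atom below $x$ is below $y$, hence $x = \bigvee\{a_i : a_i\le x\}\le y$ since joins in $\mathcal{L}_M$ are closures of unions of atom-sets. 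I would state this in a sentence or two and then turn to the decomposition identities, which are the real content.

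Next I would unwind what a decomposition $(x,z,y)$ in $\mathcal{L}_M$ actually means. Since $[x,z]\times[z,y]\cong[x,y]$ via the canonical isomorphism sending $(u,z)\mapsto u$, $(z,v)\mapsto v$, this says that the interval $[x,y]$ splits as a direct product of the intervals $[x,z]$ and $[z,y]$, with $z$ and $z'=\psi(x,y)$ being the two images of the "corner" elements. In matroid terms, $[x,y]$ is the lattice of flats of the minor $(M|_y)/x$; a product decomposition of this interval corresponds exactly to a partition of the atoms of that minor — equivalently a partition of $\phi(y)\setminus\phi(x)$ — into two sets, say $S$ and $T$, that are "modular complements" in the sense that the minor restricted to $S$ and the minor restricted to $T$ are each factors. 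Under this correspondence $z$ is the flat whose atom set below it (inside $[x,y]$) is $S$, i.e. $\phi(z)=\phi(x)\cup S$, and $z'$ is the complementary flat with $\phi(z')=\phi(x)\cup T$. Then $\phi(z)\cap\phi(z')=\phi(x)\cup(S\cap T)=\phi(x)$ since $S,T$ partition, and $\phi(z)\cup\phi(z')=\phi(x)\cup S\cup T=\phi(x)\cup(\phi(y)\setminus\phi(x))=\phi(y)$, which is exactly what we need. The trivial decompositions are the cases $S=\emptyset$ or $T=\emptyset$ and are handled identically (one of $z,z'$ equals $x$, the other equals $y$).

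The step I expect to be the main obstacle is pinning down precisely, and justifying, that the product decomposition of the interval $[x,y]$ of flats forces $z$ and $z'$ to have incidence vectors $\phi(x)\cup S$ and $\phi(x)\cup T$ for a genuine partition $S\sqcup T=\phi(y)\setminus\phi(x)$. The subtlety is that a priori the isomorphism $[x,z]\times[z,y]\cong[x,y]$ is abstract, so I must use the specific normalization in the definition of a decomposition — that it sends $(u,z)\mapsto u$ and $(z,v)\mapsto v$ — to identify $[x,z]$ with $[x,z]\subseteq[x,y]$ and $[z,y]$ with $[z,y]\subseteq[x,y]$ as actual subintervals, and then argue that the atom set (relative to $x$) of $z$ together with the atom set of $z'$ is all of $\phi(y)\setminus\phi(x)$ and that these two sets are disjoint. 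For disjointness: an atom $a_i$ with $x<x\vee a_i\le z$ and $x<x\vee a_i\le z'$ would lie in $[x,z]\cap[x,z']$, whose preimage under the product isomorphism is forced to be the single element corresponding to $(x,z)$-and-$(z,z)$, i.e. is just $x$, a contradiction. For covering: given $a_i\le y$ with $a_i\not\le x$, the element $x\vee a_i\in[x,y]$ corresponds to some $(u,v)$ in the product, and the join-irreducibility of $x\vee a_i$ over $x$ inside the interval $[x,y]$ (it covers $x$ or at least is an atom of $[x,y]$ after the product splits appropriately — more carefully, $\phi$ restricted to $[x,y]$ still has the atom-determination property for the minor) forces $u=x$ or $v=z$, i.e. $a_i$ contributes to exactly one of the two factors. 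I would spell this out using the atom-determination property of $\phi$ applied to the minor lattice $[x,y]$, close the argument by the two-line intersection/union computation above, and remark that symmetry in $z\leftrightarrow z'$ is automatic since the roles of the two factors are interchangeable.
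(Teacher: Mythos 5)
Your proposal is correct and takes essentially the same route as the paper: both verify the two identities $\phi(x)=\phi(z)\cap\phi(z')$ and $\phi(y)=\phi(z)\cup\phi(z')$ directly from the product structure $[x,z]\times[z,y]\cong[x,y]$, the first via meets of flats being intersections and the second via the interval decomposition forcing the union of the two flats to be all of $\phi(y)$. The only difference is one of detail: the paper asserts in one line that the decomposition makes $\phi(z)\cup\phi(z')$ a flat, while you spell out the underlying atom-partition argument (disjointness from $[x,z]\cap[x,z']=\{x\}$, covering from $x\vee a_i$ being an atom of $[x,y]$), which is a legitimate filling-in of the same idea.
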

\begin{proof}
The map above just uses the interpretation of flats as subsets of the ground set. 
%In the lattice of flats, the meet operation is just the usual 
Let $(x,z,y),(x,z',y)$ be a pair of complementary decompositions.
Then surely $x=z \wedge z'$ and $y= z \vee z'$ are true. 
Now $\phi(x)=\phi(z)\cap \phi(z')$ holds, because the set theoretic intersection of flats 
is the meet operation in the lattice of flats. 
Because of the decomposition $[x,y,]\cong[x,z]\times[x,z']$, 
we obtain that $\phi(z) \cup \phi(z')$ is already the flat $\phi(z\vee z')$.  
%Thus $\phi(x)=\phi(z)\cap \phi(z')$ and $\phi(y)=\phi(z)\cup \phi(z')$ hold. 
\end{proof}

\begin{definition}
\label{incid2}
%For $A\subseteq E(M)$, its incidence vector $e_A\in \{ 0,1\}^n$ is defined by $(e_A)_i=1 $ if and only if $a_i \subseteq A$.  
The \emph{matroid polytope} of $M$ is the convex hull of the incidence vectors of bases of $M$ in $\R^{n}$. 
\end{definition} 
This is a pure polytope of dimension $n-c(M)$, where $c(M)$ denotes the number of connected components of $M$. 
Its subfaces are matroid polytopes of matroids, which are direct sums of minors of $M$, called matroid types, themselves. 
Bases of matroid types correspond one-to-one to vertices of the subface. 
These bases are the possible outputs of the greedy algorithm resp. some weight vector $\omega$. 
Thus matroid types are denoted by $M_\omega$. 
The weight vectors $\omega$, which induce the same matroid type, form a cone in $\R^n$. 
Those cones are invariant under translations of the orthogonal complement of $P_M$ in $\R^n$, which is
the subspace generated by the incidence vectors of seperators of $M$. 
The set of those cones form a complete polyhedral fan $\widetilde{{P_M}^*}$. 
\begin{definition}
The \emph{Bergman fan} $\widetilde{\mathcal{B}}(M)$ is the subfan of $\widetilde{{P_M}^*}$ 
consisting of the cones whose induced matroid type is loopfree. 
\end{definition}
Because of the invariances of the cones, we loose no information when restricting the dimension in the following way. 
\begin{definition}
The \emph{Bergman complex} $\mathcal{B}(M)$ is the intersection of $\widetilde{\mathcal{B}}(M)$ with 
the unit-sphere $S^{n-1}$ and the linear span of $P_M$. 
\end{definition}
This gives a spherical, polyhedral complex, whose face poset is the same as the face poset of the Bergman fan, which 
is isomorphic to the poset of loopfree matroid types ordered by reversed inclusion of bases. 

\begin{theorem}
\label{thm:bergman}
Let $M$ be a finite, loopfree matroid, $\mathcal{L}_M$ its lattice of flats, 
$B(M)$ its Bergman fan and $\mathcal{F}(B(M))$ its face poset. 
The map
\begin{displaymath}
\psi: \mathcal{F}(B(M)) \rightarrow D(\mathcal{L}_M,\Gmax)
, ~ \Mw \mapsto \{ A \in \mathcal{L}_M | \rank(A)=|A\cap b|~  \forall b \in \mathcal{B}(\Mw) \} 
\end{displaymath}
is an embedding.
A subset $A$ is in the image iff it contains $\hat{0}$ and $\hat{1}$. 
\end{theorem}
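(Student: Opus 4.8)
The plan is to understand the map $\psi$ explicitly and show it is an order-preserving bijection onto the claimed image, with order-preserving inverse. The key geometric fact I would use is the structure of matroid polytopes: a face of the matroid polytope $P_M$ corresponds to a matroid type $M_\omega$, which is a direct sum of minors $M|_{F_1/\hat 0} \oplus M|_{F_2/F_1}\oplus\cdots\oplus M|_{\hat 1/F_{k-1}}$ along a chain of flats $\hat 0 = F_0 < F_1 < \cdots < F_k = \hat 1$ when $M$ is loopfree (this chain, or \emph{flag of flats}, is exactly the data of a cone in the Bergman fan — this is the classical description going back to Sturmfels and Ardila–Klivans, which I would cite via \cite{AK04},\cite{FS04}). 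So first I would establish that $\mathcal{F}(B(M))$ is isomorphic to the poset of flags of flats containing $\hat 0$ and $\hat 1$, ordered by refinement (finer flag = smaller cone = face of the Bergman fan, matching the reversed-inclusion-of-bases order). The rank condition $\rank(A) = |A\cap b|$ for all bases $b$ of $M_\omega$ says precisely that $A$ is a flat through which every such basis passes "cleanly", and a short argument with the greedy algorithm shows this holds iff $A$ belongs to the flag defining $M_\omega$; hence $\psi(M_\omega)$ is literally the underlying set of that flag. Thus $\psi$ sends a flag to itself as a subset, up to identifying a cone with its flag.

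Next I would invoke Proposition \ref{prop:atom}, which gives that the atom map $\phi:\mathcal{L}_M\to B_n$ is a $\Gmax$-realization, so the machinery of Section \ref{ch2b} and Section \ref{ch3} applies to $\mathcal{D}(\mathcal{L}_M,\Gmax)$. The core claim is then: for a chain $C$ of flats with $\hat 0,\hat 1\in C$, the closure $\Cerz$ (with respect to $\Gmax$) is again a chain, namely $C$ itself, i.e.\ no new flats are generated. This is where the matroid hypothesis does real work: a decomposition $(x,z,y)$ in $\Gmax$ with $x,y\in C$ forces $[x,y]\cong[x,z]\times[x,z']$, but in the lattice of flats of a matroid an interval $[x,y]$ with $x,y$ on a common chain of a flag — in fact $[x,y]$ is itself the lattice of flats of a minor — decomposes as a nontrivial product only if there is an intermediate separator, and the complementary element $z'=\psi(x,y)$ is the complementary flat; one checks $z$ and $z'$ both lie in the sublattice generated by refining the flag, so $\Cerz$ is exactly the set of flats obtainable by iterated such refinements, which is a chain (a flag). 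Conversely every flag arises this way. By Proposition \ref{valids}, a set $A$ lies in $\mathcal{D}(\mathcal{L}_M,\Gmax)$ iff $A = \Cerz$ for a maximal chain $C\subseteq A$; combining with the previous sentence, the elements of $\mathcal{D}(\mathcal{L}_M,\Gmax)$ containing both $\hat 0$ and $\hat 1$ are exactly the flags of flats with $\hat 0,\hat 1$ — matching the image description.

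It then remains to check that $\psi$ is order-preserving and that its inverse is: refining a flag corresponds on one side to passing to a face of the Bergman fan (smaller cone, via reversed inclusion of bases) and on the other side to the inclusion order on the decomposition complex $\mathcal{D}(\mathcal{L}_M,\Gmax)$ (a flag refining another contains it as a subset). Since both directions of the correspondence "cone $\leftrightarrow$ flag $\leftrightarrow$ subset" are now explicit and visibly inverse to one another, injectivity and the identification of the image follow. For surjectivity onto $\{A\in\mathcal{D}(\mathcal{L}_M,\Gmax)\mid \hat0,\hat1\in A\}$ I would, given such an $A$, take a maximal chain $C\subseteq A$ (automatically containing $\hat0,\hat1$), use $A=\Cerz=C$ from above, and exhibit the matroid type $M_\omega$ whose flag is $C$.

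\textbf{Main obstacle.} The delicate step is the closure computation: proving that for a chain $C$ of flats through $\hat 0$ and $\hat 1$ the $\Gmax$-closure $\Cerz$ produces no flat outside a flag — equivalently, that every product decomposition $[x,y]\cong[x,z]\times[x,z']$ of an interval in $\mathcal{L}_M$ refines the flag rather than creating incomparable flats that would force $A$ to fail condition (ii) of Proposition \ref{valids}. This is really the statement that the connected-component (separator) structure of matroid minors is "compatible with flags," and pinning it down cleanly — rather than via a messy induction on the length of the chain — will be the heart of the argument. Everything else (the greedy-algorithm characterization of $\psi(M_\omega)$, order-preservation, the bookkeeping for the image) is routine given the results already in the excerpt, especially Proposition \ref{valids} and Proposition \ref{prop:atom}.
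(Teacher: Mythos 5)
Your central claim is false, and the argument collapses with it. You assert that for a chain $C$ of flats containing $\hat{0}$ and $\hat{1}$ the closure $\Cerz$ with respect to $\Gmax$ is $C$ itself, and correspondingly that $\psi(\Mw)$ is ``literally the underlying set of a flag.'' That would make $\mathcal{D}(\mathcal{L}_M,\Gmax)$ coincide with the order complex and would identify $\mathcal{F}(B(M))$ with the poset of flags of flats --- but that is the \emph{fine} subdivision of Ardila--Klivans, not the face poset of the Bergman fan, which the paper (and the standard theory) describes via loopfree matroid types ordered by reversed inclusion of bases. Whenever a proper decomposition $(x,z,y)\in\Gmax$ has $x,y\in C$, the closure operator adjoins $z$ \emph{and} its complement $\psi(x,y)$, which are incomparable, so $\Cerz$ is in general strictly larger than $C$ and is not a chain; this already happens in Example \ref{ex:first} and is visible in Figure \ref{fig:cubes}, and it occurs for matroids as soon as some minor $(M|y)/x$ with $x,y$ on the flag is disconnected (e.g. direct sums, where Bergman cones are products and top faces of $\mathcal{B}(M)$ are not simplices). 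Your own sentence that ``$\Cerz$ is exactly the set of flats obtainable by iterated such refinements, which is a chain'' is internally inconsistent: iterated refinements produce incomparable complementary flats. So the ``delicate step'' you flag is not merely delicate --- the conclusion you want from it is wrong.

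What the actual proof has to do, and what your proposal does not supply, is the opposite direction: show that the typically non-chain set $\psi(\Mw)$ is (a) closed under $\langle\cdot\rangle_{\Gmax}$ and (b) generated by a single chain. The paper does this by citing the external result that $\psi(\Mw)$ is closed under taking connected components and satisfies $\rank(z)+\rank(z')=\rank(z\wedge z')+\rank(z\vee z')$ for $z,z'\in\psi(\Mw)$; this modularity produces complementary decompositions $(x,z,y),(x,z',y)\in\Gmax$ with $x=z\wedge z'$, $y=z\vee z'$ in $\psi(\Mw)$, so any two elements are generated from the chain $\{x,y\}$, and Remark \ref{partofa} then yields condition (ii) of Proposition \ref{valids}. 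None of this rank-theoretic input appears in your outline. The parts of your proposal on order-preservation, injectivity, and the image characterization (taking $\omega=\sum_{c\in C}e_c$ for a chain $C\ni\hat{0},\hat{1}$) are in the right spirit and close to the paper, but they rest on the incorrect identification of faces with flags, so the proof as proposed does not go through.
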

\begin{proof}
The fact, that $\langle \psi(M_\omega) \rangle=\psi(M_\omega)$, \emph{i.e.} the closedness 
under taking connected components, is proven in \cite[Prop. 5.3]{Dlu11}. 
So it is left to show that $\psi(M_\omega)$ is generated by a single chain. 
The maximal decomposition set is both symmetric as well as downwards closed. 
So in view of Remark \ref{partofa}, it suffices to show, that any pair of elements 
can be generated from a single chain. 
For $z,z'\in \psi(M_\omega)$, from \cite[Prop.5.3]{Dlu11} follows, that 
$\rank(z)+\rank(z')=\rank(z\wedge z')+\rank(z \vee z')$. 
Thus the rank of the matroid $(M| z) /x\oplus (M| z') /x$ equals the rank of $(M |y) /x$.
Therefore there are complementary decompositions 
$(x,z,y),(x,z',y)\in \Gmax$. 
Again by \cite[Prop. 5.3]{Dlu11}, we obtain that $x=z\wedge  z',y=z\vee z' \in \psi(M_\omega)$. 
So the chain $\{x,y\}\subseteq \psi(M_\omega)$ generates both $z$ and $z'$. 

Order preserving is satisfied, because the bigger a face of the Bergman complex, the fewer bases has the 
corresponding matroid type $M_\omega$, the fewer restrictions for flats there are to satisfy, the more 
flats do satisfy those. 
Injectivity holds, because $M_\omega$ is exactly the set of bases $b$ satisfying $|b\cap x|=\rank(x)$ for all 
$x\in \psi(M_\omega)$. 

Left to show is the determination of the image of the embedding. 
Since for all bases $b\in M$, $|b\cap x|=\rank(x)$ holds for $x\in \{\hat{0},\hat{1}\}$, only such images can occur. 
On the other hand, for any chain $C$ containing $\hat{0},\hat{1}$, we obtain that 
$\langle C \rangle=\psi(M_\omega)$ for $\omega=\sum_{c \in C}e_c$. 
\end{proof}

\begin{proposition}
%For the realization $\phi$ of Proposition \ref{prop:atom}, 
The Bergman fan is essentially the 
decomposition fan resp. the realization $\phi$ of Proposition \ref{prop:atom}, \emph{i.e.}
\begin{displaymath}
\widetilde{\mathcal{B}}(M)= \R_{\geq 0} ( \widetilde{D}(\mathcal{L}_M,\Gmax)) + \R (1,\ldots,1),
\end{displaymath}
where $\R_{\geq 0}$ stands for non-negative scaling. 
\end{proposition}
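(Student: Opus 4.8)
The plan is to translate the set-theoretic description of Bergman cones into the language of the realization $\phi$ from Proposition~\ref{prop:atom}, and then invoke Theorem~\ref{thm:bergman} together with Corollary~\ref{cor:conedec}. Recall that a cone of $\widetilde{\mathcal{B}}(M)$ is the closure of the set of weight vectors $\omega$ inducing a fixed loopfree matroid type $M_\omega$; equivalently it is the set of $\omega$ for which the greedy algorithm produces exactly the bases in $\mathcal{B}(M_\omega)$. The first step is to record the standard fact (see \cite{AK04}, \cite{FS04}) that this cone is precisely $\{\omega\mid \omega \text{ is constant on each flat-step, i.e. } \omega = \sum_{A\in C}\lambda_A e_A + \mu(1,\dots,1),\ \lambda_A\ge 0\}$ for $C$ the chain of flats $\psi(M_\omega)$ cut out as in Theorem~\ref{thm:bergman}. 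In other words, the cone over which $M_\omega$ is constant is exactly $\R_{\ge 0}\{e_{\phi(A)}\mid A\in \psi(M_\omega)\} + \R(1,\dots,1)$, using that $\phi(A)=\{i\mid a_i\le A\}$ so $e_{\phi(A)}=e_A$.

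Second, I would use Theorem~\ref{thm:bergman}: the face poset $\mathcal{F}(B(M))$ is in bijection with $D(\mathcal{L}_M,\Gmax)$ via $M_\omega\mapsto \psi(M_\omega)$, and each such element is of the form $\langle C\rangle_{\Gmax}$ for a chain $C$ through $\hat 0$ and $\hat 1$. By Corollary~\ref{cor:conedec} the polytope $\Gamma(\psi(M_\omega))$ is the union of the simplices $\Gamma(C')$ over maximal chains $C'$ in $\psi(M_\omega)$, and each $\Gamma(C')=\conv\{e_{\phi(c)}\mid c\in C'\}$. Taking the cone $\R_{\ge 0}(\cdot)$ over this union and adding the lineality line $\R(1,\dots,1)$ therefore produces exactly $\R_{\ge 0}\{e_{\phi(c)}\mid c\in C'\}+\R(1,\dots,1)$, ranged over maximal chains $C'$ of $\psi(M_\omega)$ — and by the first step this union over $C'$ is the Bergman cone indexed by $M_\omega$. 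So the two fans have the same maximal cones.

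Third, I would check that the cell structures match, not merely the supports: a face of $\Gamma(\psi(M_\omega))$ is $\Gamma(\psi(M_{\omega'}))$ for $M_{\omega'}$ a coarsening (fewer bases, larger face), which is exactly the incidence relation in $\mathcal{F}(B(M))$ recorded after the definition of the Bergman complex; coning off preserves this. Finally I would note that the passage between $\widetilde{\mathcal{B}}(M)$ and $\mathcal{B}(M)$ (intersecting with $S^{n-1}\cap \spaan P_M$) is the same normalization applied on both sides, so no information is lost, and the displayed equality holds on the nose as fans, with the caveat that the cone of the empty face collapses to the lineality space $\R(1,\dots,1)$.

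The main obstacle I expect is the first step: making precise that the chamber of weight vectors inducing a fixed matroid type really is the nonnegative span of the atom-incidence vectors of the associated chain of flats, plus lineality. This is the bridge between the greedy-algorithm (tropical) description of Bergman cones and the combinatorial closure operator $\langle\cdot\rangle_{\Gmax}$, and it is where one must actually use properties of matroids — concretely, that $\rank$ is submodular and that the greedy output depends only on the order type of $\omega$ restricted to the flats, a fact underlying \cite[Prop.~5.3]{Dlu11} already cited in the proof of Theorem~\ref{thm:bergman}. Everything after that is bookkeeping with cones and Corollary~\ref{cor:conedec}.
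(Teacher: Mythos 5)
Your proposal is correct, and it rests on exactly the two external ingredients the paper uses --- \cite[Theorem 1]{AK04} and Corollary \ref{cor:conedec} --- but it is organized quite differently. The paper's proof is a single pointwise support argument: every $\omega \in \R^n$ has a unique ``staircase'' presentation $\omega = \sum_i \lambda_i e_{F_i} + \mu(1,\ldots,1)$ with $\lambda_i > 0$ and $F_{i+1} \subseteq F_i$; by \cite[Theorem 1]{AK04} the type $M_\omega$ is loopfree iff all the $F_i$ are flats, and by Corollary \ref{cor:conedec} the conical combinations of incidence vectors of chains of flats are exactly the points of $\R_{\geq 0}(\widetilde{D}(\mathcal{L}_M,\Gmax))$; this settles the displayed set equality in three lines without ever invoking Theorem \ref{thm:bergman}. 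You instead match the two fans cone by cone via the face-poset bijection $M_\omega \mapsto \psi(M_\omega)$ of Theorem \ref{thm:bergman} and then check that incidences agree. What your route buys is strictly more than the stated proposition: an identification of the cell structures rather than just of the supports (which is what the word \emph{essentially} in the statement is hedging). What it costs is that your first step --- the chamber of a fixed loopfree type equals the cone on $\{e_{\phi(A)} \mid A \in \psi(M_\omega)\}$ plus lineality --- is a slightly stronger, per-cone form of the Ardila--Klivans result; you rightly flag it as the crux, and to close it you also need that the coarse cones are convex, so that the union of the fine simplicial cones over maximal chains of $\psi(M_\omega)$ coincides with the cone generated by all the rays. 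One terminological slip to fix: $\psi(M_\omega)$ is in general \emph{not} a chain (it may contain incomparable elements $z,z'$ coming from a decomposition), so in your first step you should speak of the cone generated by the incidence vectors of all of $\psi(M_\omega)$, or of the union over its maximal chains, rather than of ``the chain of flats $\psi(M_\omega)$''.
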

\begin{proof}
Any vector $\omega\in \R^n$ can be written as 
$ \sum_{i=1}^k \lambda_i e_{F_i} + \mu (1,\ldots,1)$, where $\mu \in \R$, $\lambda_i >0$, 
$ F_{i+1} \subseteq F_i$ and $e_{F_i}$ is the incidence vector of $F_i$. 
The sets $F_1,\ldots,F_n$ in this presentation are uniquely determined, though the coefficients are not. 
In \cite[Theorem 1]{AK04} it is shown, that the induced matroid type $M_\omega$ is loopfree if and only if 
$F_1,\ldots, F_n$ are flats of $M$. 
But in view of Corollary \ref{cor:conedec}, vectors of the form $ \sum_{i=1}^k \lambda_i e_{F_i}$ 
with $\lambda_i \geq 0$ 
are exactly the ones lying inside 
$\R_{\geq 0} ( \widetilde{D}(\mathcal{L}_M,\Gmax))$.
\end{proof}

\bibliography{bibgeneral}{}

\providecommand{\bysame}{\leavevmode\hbox to3em{\hrulefill}\thinspace}
\providecommand{\MR}{\relax\ifhmode\unskip\space\fi MR }
% \MRhref is called by the amsart/book/proc definition of \MR.
\providecommand{\MRhref}[2]{%
  \href{http://www.ams.org/mathscinet-getitem?mr=#1}{#2}
}
\providecommand{\href}[2]{#2}
\begin{thebibliography}{GM88}

\bibitem[AK05]{AK04}
F.~Ardila and C.J. Klivans, \emph{{The Bergman complex of a matroid and
  phylogenetic trees}}, J. Combin. Theory Ser. B \textbf{96} (2005), no.~1,
  38--49.

\bibitem[Ber71]{BE71}
G.M. Bergman, \emph{{The logarithmic limit-set of an algebraic variety}},
  Transactions of the American Mathematical Society (1971), 459--469.

\bibitem[Bj{\"o}82]{B82}
Anders Bj{\"o}rner, \emph{{On the homology of geometric lattices}}, Algebra
  Universalis \textbf{14} (1982), no.~1, 107--128. \MR{634422 (83d:05029)}

\bibitem[Dlu11]{Dlu11}
Martin Dlugosch, \emph{{New light on Bergman complexes by decomposing matroid
  types}}, November 2011.

\bibitem[Fei06]{Fe05}
E.M. Feichtner, \emph{{Complexes of trees and nested set complexes}}, Pacific
  J. Math. \textbf{227} (2006), no.~2, 271--286.

\bibitem[FK04]{FK04}
E.M. Feichtner and D.N. Kozlov, \emph{{Incidence combinatorics of
  resolutions}}, Selecta Math. (N.S.) \textbf{10} (2004), no.~1, 37--60.

\bibitem[FM05]{FM05}
E.M. Feichtner and I.~M{\"u}ller, \emph{{On the topology of nested set
  complexes}}, Proc. Amer. Math. Soc. \textbf{133} (2005), no.~4, 999--1006.

\bibitem[Fol66]{Fol66}
Jon Folkman, \emph{{The homology groups of a lattice}}, J. Math. Mech.
  \textbf{15} (1966), 631--636. \MR{0188116 (32 \#5557)}

\bibitem[FS05]{FS04}
E.M. Feichtner and B.~Sturmfels, \emph{{Matroid polytopes, nested sets and
  Bergman fans}}, Port. Math. (N.S.) \textbf{52} (2005), no.~4, 437--468.

\bibitem[GM88]{Gom88}
Mark Goresky and Robert MacPherson, \emph{{Stratified {M}orse theory}},
  {Ergebnisse der Mathematik und ihrer Grenzgebiete (3) [Results in Mathematics
  and Related Areas (3)]}, vol.~14, Springer-Verlag, Berlin, 1988. \MR{932724
  (90d:57039)}

\bibitem[Stu02]{STU02}
B.~Sturmfels, \emph{{Solving systems of polynomial equations}}, Amer.Math.Soc.,
  CBMS Regional Conferences Series, No 97, Providence, Rhode Island, 2002.

\bibitem[Zie95]{Z95}
G.M. Ziegler, \emph{{Lectures on polytopes}}, vol. 152, Springer, 1995.

\end{thebibliography}
\bibliographystyle{amsalpha}

\end{document}